\numberwithin{equation}{section}
\newtheorem{theorem}{\sc Theorem}[section]
\newtheorem{definition}[theorem]{\sc Definition}
\newtheorem{lemma}[theorem]{\sc Lemma}
\theoremstyle{plain}
\newtheorem{prop}[theorem]{\sc Proposition}
\theoremstyle{remark}
\newtheorem{remark}[theorem]{\sc Remark}
 \newcommand{\ol}[1]{\overline{#1}}
 \newcommand{\ul}[1]{\underline{#1}}
 \newcommand{\wh}[1]{\widehat{#1}}
\newcommand{\F}{\mathcal F}
\newcommand{\R}{\mathbb{R}}
\newcommand{\Sol}{\mathcal S}
\newcommand{\Z}{\mathbb{Z}}
\newcommand{\N}{\mathbb{N}}
\renewcommand{\P}{\mathbb{P}}
\newcommand{\E}{\mathbb{E}}
\newcommand{\cal}[1]{\mathcal #1}
\newcommand{\1}{\mathbbm1}
\newcommand{\HV}{{\mathcal H}}
\newcommand{\eps}{\varepsilon}
\renewcommand{\epsilon}{\varepsilon}
\renewcommand{\emptyset}{\varnothing}
\renewcommand{\le}{\leqslant}
\renewcommand{\ge}{\geqslant}
\newcommand{\CC}{\D C}
\newcommand{\Lip}{\D{Lip}}
\newcommand{\ccyl}{[0,+\infty)\times\R}
\newcommand{\Ham}{\mathscr{G}}
\newcommand{\Hamzero}{\mathscr{G}_{sqc}}
\newcommand{\D}[1]{\mbox{\rm #1}}
\newcommand{\esssup}{\mathrm{ess\,sup\,}}
\newcommand{\ucv}{\rightrightarrows}
\begin{document}
	
\title[Stochastic homogenization of quasiconvex  viscous HJ
equations in 1d]{Stochastic homogenization of a class of quasiconvex\\ and possibly degenerate viscous  HJ equations in 1d}

\author[A.\ Davini]{Andrea Davini}
\address{Andrea Davini\\ Dipartimento di Matematica\\ {Sapienza} Universit\`a di
  Roma\\ P.le Aldo Moro 2, 00185 Roma\\ Italy}
\email{davini@mat.uniroma1.it}

\date{November 1, 2023}

\subjclass[2010]{35B27, 35F21, 60G10.} 
\keywords{Viscous Hamilton-Jacobi equation, stochastic homogenization, stationary ergodic random environment, sublinear corrector, viscosity solution, scaled hill and valley condition}

\begin{abstract}
  We prove homogenization for possibly degenerate viscous Hamilton-Jacobi equations with a
  Hamiltonian of the form $G(p)+V(x,\omega)$, where $G$ is a quasiconvex,  
  locally Lipschitz function with superlinear growth, the
  potential $V(x,\omega)$ is bounded and Lipschitz continuous, and the diffusion coefficient 
  $a(x,\omega)$ is allowed to vanish on some regions or even on the whole $\R$. The
  class of random media we consider is defined by an explicit scaled hill
  condition on the pair $(a,V)$ which is
  fulfilled as long as the environment is not ``rigid''.
\end{abstract}

\dedicatory{Dedicated to Giuseppe Buttazzo on the occasion of his 70th birthday}

\maketitle

\section{Introduction}\label{sec:intro}

We are concerned with the asymptotic behavior,  as $\epsilon\to 0^+$, of solutions of a possibly degenerate viscous
Hamilton-Jacobi (HJ) equation of the form
\begin{equation}\label{eq:introHJ}
  \partial_t u^\epsilon=\epsilon a\left(\frac{x}{\epsilon},\omega\right) \partial^2_{xx} u ^\epsilon+G(\partial_x u^\epsilon)+  \beta V\left(\frac{x}{\epsilon},\omega\right) \qquad \hbox{in $(0,+\infty)\times\R$,}
\end{equation}
when $G:\R\to\R$ is assumed locally Lipschitz, superlinear and quasiconvex. The dependence of the equation on the random environment $(\Omega,\F,\P)$ enters through the diffusion coefficients $a$ and the potential $V$, which are assumed to be stationary with respect to shifts in $x$ and Lipschitz continuous on $\R$, for every fixed $\omega$. We furthermore assume that the functions $a$ and $V$ take values in $[0,1]$, where $\beta\geqslant 0$ is a coefficient adjusting the magnitude of the potential. We emphasize that the diffusion coefficient $a$ can vanish on some regions or even on the whole $\R$.  In this note we prove homogenization of such an equation \eqref{eq:introHJ} by additionally assuming that the pair $(a,V)$ satisfies a {\em scaled hill condition}, see Section \ref{sec:result}, to which we refer for more details and for  the precise statement of our homogenization result. 
In the inviscid case (i.e., $a\equiv 0$), this scaled hill condition reduces to requiring that $\esssup_x V(x,\omega) = 1$ almost surely, and we recover the 1-dimensional homogenization results proved in \cite{DS09}.  When $a>0$, this scaled hill condition agrees with the one adopted in \cite{Y21b} and we recover the results therein contained.\smallskip

Equations of the form \eqref{eq:introHJ} are a subclass of general
viscous stochastic HJ equations
\begin{equation}\label{vHJ}
  \partial_tu^\epsilon= \epsilon\text{tr}\left(A\left(\frac{x}{\epsilon},\omega\right) D^2 u^\epsilon\right)+H\left(D u^\epsilon,\frac{x}{\epsilon},\omega\right)\qquad \hbox{in $(0,+\infty)\times\R^d$,}
\end{equation}
where $A(\cdot,\omega)$ is a bounded, symmetric, and nonnegative
definite $d\times d$ matrix with a Lipschitz square root. The ingredients of the equation are assumed to be stationary with respect to shifts in $x$. 
This setting encompass the periodic and the quasiperiodic cases, for which homogenization has been proved under fairly general assumptions by showing the existence of (exact or approximate) correctors, i.e., sublinear functions that solve an associated stationary HJ equations \cite{LPV, Evans92, I_almostperiodic, LS_almostperiodic}. 
In the stationary ergodic setting, such solutions do not exist in general, as it was shown in \cite{LS_correctors}, see also \cite{DS09,CaSo17} for related results. This is the main reason why the extension of the homogenization theory to random media is nontrivial 
and required the development of new arguments. 

The first homogenization results in this framework were obtained for convex Hamiltonians in the case of inviscid equations in \cite{Sou99, RT} and then for their viscous counterparts in \cite{LS2005,KRV}. 
By exploiting the metric character of first order HJ equations, homogenization has been extended to the case of quasiconvex Hamiltonians, first in dimension 1 \cite{DS09} and then in any space dimension \cite{AS}. 

The topic of homogenization in stationary ergodic media for HJ equations that are nonconvex in the gradient variable
remained an open problem for about fifteen years. Recently, it has been shown via counterexamples, first in the inviscid case \cite{Zil,FS}, then in the viscous one \cite{FFZ}, that homogenization can fail for Hamiltonians of the form $H(x,p,\omega):=G(p)+V(x,\omega)$ whenever $G$ has a strict saddle point. This has shut the door to the possibility of having a general qualitative homogenization theory  in the stationary ergodic setting in dimension $d\geqslant 2$, at least without imposing further mixing conditions on the stochastic environment. 

On the positive side, a quite general homogenization result, which includes as particular instances both the inviscid and viscous cases, has been established in \cite{AT} for Hamiltonians that are positively homogeneous of degree $\alpha\geqslant 1$ and under a finite range of dependence condition on the probability space $(\Omega,\F, \P)$. 

In the inviscid case, homogenization has been proved in \cite{ATY_1d,Gao16} in dimension $d=1$ for a rather general class of coercive and nonconvex Hamiltonians, and in any space dimension for Hamiltonians of the form $H(x,p,\omega)=\big(|p|^2-1\big)^2+V(x,\omega)$, see   \cite{ATY_nonconvex}. 

Even though the addition of a diffusive term is not expected to prevent homogenization, the literature 
on viscous HJ equations with nonconvex Hamiltonians in stationary ergodic media  is more limited. 
The viscous case is in fact known to present additional 
challenges which cannot be overcome by mere
modifications of the methods used for $a\equiv 0$.

For $d=1$, apart from already mentioned work \cite{AC18},
other classes of nonconvex Hamiltonians for which the corresponding viscous HJ \eqref{vHJ} homogenizes are provided in
\cite{DK17, YZ19, KYZ20, DKY23}. In the joint paper \cite{DK17}, we have
shown homogenization of \eqref{vHJ} with $H(x,p,\omega)$ which
are ``pinned'' at one or several points on the $p$-axis and convex in
each interval in between. For example, for every $\alpha>1$ the
Hamiltonian $H(x,p,\omega)=|p|^\alpha-c(x,\omega)|p|$ 
is pinned at $p=0$ (i.e.\
$H(x,0,\omega)\equiv
\mathrm{const}$) and convex in $p$ on each of the two intervals $(-\infty, 0)$ and
$(0,+\infty)$.
Clearly, adding a non-constant potential breaks the pinning
property. In particular, homogenization of equation \eqref{vHJ}, where $d=1$, $A\equiv \mathrm{const}>0$,
\begin{equation}
  \label{open}
  H(x,p,\omega):=\frac12\,|p|^2-c(x,\omega)|p|+\beta V(x,\omega),\quad
  0<c(x,\omega)\le C,\quad \ \beta>0
\end{equation}
remained an open problem even when $c(x,\omega)\equiv c>0$.  
Homogenization for this kind of equations with $A\equiv 1/2$ and $H$ as
in \eqref{open} with $c(x,\omega)\equiv c>0$ was proved in \cite{KYZ20} 
under a novel hill and valley condition,\footnote{It is worth noticing that the valley and hill condition is fulfilled for a wide class of typical random environments without any restriction on their mixing properties, see  \cite[Example 1.3]{KYZ20} and \cite[Example 1.3]{YZ19}.  It is however not satisfied if the potential is ``rigid'', for example, in the periodic case.}
 that was introduced in \cite{YZ19} 
to study a sort of discrete version of this problem, where the Brownian motion in the  
stochastic control problem associated with equation \eqref{eq:introHJ} is replaced by controlled random walks on $\Z$. 
The approach of \cite{YZ19,KYZ20} relies on the Hopf-Cole
transformation, stochastic control representations of solutions and
the Feynman-Kac formula. It is applicable only to \eqref{eq:introHJ}
with $G(p)=\frac12|p|^2-c|p|=\min\{\frac12p^2-cp,\frac12p^2+cp\}$. In the joint paper 
\cite{DK22}, we have proposed a different proof solely based on PDE methods. 
This new approach is flexible enough to be applied to the possible degenerate case 
$a\ge 0$, and to any $G$ which is a minimum of a finite number of convex
superlinear functions $G_i$  having the same
minimum. The original hill and valley condition assumed in \cite{YZ19,KYZ20} is weakened in 
favor of a scaled hill and valley condition.
The arguments crucially rely on this condition and on the fact that all the functions $G_i$ have the same minimum. 
As in \cite{KYZ20}, the shape of the effective Hamiltonian associated with 
$G$ is derived from the ones associated to each $G_i$.

The PDE approach introduced in \cite{DK22} was subsequently refined in \cite{Y21b} in order to prove 
homogenization for equation \eqref{eq:introHJ} when the function $G$ is superlinear and quasiconvex,  $a>0$ and the pair 
$(a,V)$ satisfies a scaled hill condition equivalent to the one adopted in this paper, see Remark \ref{rem:vac}. 
The core of the proof consists in showing existence and uniqueness of sublinear correctors that satisfy suitable derivative bounds. These kind of results were obtained in \cite{DK22} by proving tailored-made  comparison principles and by exploiting a general result from \cite{CaSo17}  (and this was actually the only point where the piecewise convexity of $G$ was used). The novelty brought in by \cite{Y21b} 
relies on the nice observation that this can be directly proved via ODE arguments, which are viable since we are in one space dimension and $a>0$. 

A substantial step forward in the direction of obtaining a general homogenization result for 
viscous HJ equations in dimension 1 has been taken in \cite{DKY23}, where the  
main novelty consists in allowing a  general
superlinear $G$ without any restriction on its shape or the
number of its local extrema. Our analysis crucially relies on the assumption that the pair 
$(a,V)$ satisfies the scaled hill and valley condition and, differently from \cite{DK22}, that 
the diffusive coefficient $a$ is nondegenerate, i.e., $a>0$. 
Analogously to \cite{KYZ20, DK22}, the
function $G$ can be seen as a minimum of quasiconvex superlinear
functions $G_i$, but when these latter have distinct minima there is
a nontrivial interaction among them in the homogenization process, and
the shape of the effective Hamiltonian associated with $G$ can no
longer be guessed from those associated with each $G_i$. 
The proof consists in showing existence of suitable correctors whose derivatives 
are confined on different branches of $G$, as well as on a strong induction argument 
which uses as  base case the homogenization result established in \cite{Y21b}. 
The fact that homogenization for quasiconvex $G$ was only available for $a>0$ is one of the  
main reasons why we did not try to generalize our arguments to the possible 
degenerate case $a\geqslant 0$.\vspace{0.5ex}

The purpose of this note is to fill this gap and to provide a unified proof which encompasses both the inviscid and the viscous case. By reinterpreting the approach followed in \cite{Y21b} in the viscosity sense and by making a more substantial use of viscosity techniques, we generalize the homogenization result for equation \eqref{eq:introHJ} with quasiconvex $G$ to the possible degenerate case $a\geqslant 0$. A key tool in this regard is played by the well known stability property of the notion of viscosity solution. Our exposition also benefits of some observations and arguments developed by the author in the attempt to prove a more general homogenization result.  They have already displayed their usefulness for the joint research \cite{DKY23}, where they appeared for the first time.\vspace{0.5ex}

The paper is organized as follows. In Section \ref{sec:result} we present the setting and the standing assumptions and we state our homogenization result, see Theorem \ref{thm:genhom}. In Section \ref{sec:outline}  we describe how we can reduce the proof to the case of a quasiconvex function $G$ satisfying more stringent assumptions, see Theorem \ref{teo reduction}, and we outline our strategy to prove Theorem \ref{thm:genhom}. In Section \ref{sec:PDE results} we prove some deterministic PDE results for the stationary HJ equations we shall consider in the sequel. In Section \ref{sec:correctors} we prove existence and uniqueness of correctors possessing stationary derivatives that satisfy suitable bounds. We also derive some additional information that will be needed in Section \ref{sec:homogenization}, where we prove Theorem \ref{thm:genhom}.\\

\indent{\textsc{Acknowledgements. $-$}} The author benefited from on-line discussions with Elena Kosygina and Atilla Yilmaz in the period April-July 2020. The author wishes to thank Luca Rossi for the help provided with the proof of Proposition \ref{prop pointwise solution}. The author also thanks the anonymous referee for his comments, who contributed to improve the presentation.


%
%

\section{Let us get started}\label{sec:preliminaries}
\subsection{Assumptions and statement of the main result}\label{sec:result}

We will denote by $\CC(\R)$ the family of continuous functions  on $\R$. We will regard $\CC(\R)$ as a Polish space endowed with a metric inducing the topology of uniform convergence on compact subsets of $\R$. 

The triple $(\Omega,\F, \P)$ denotes a probability space, where $\Omega$ is a Polish space, ${\cal F}$ is the $\sigma$-algebra of Borel subsets of $\Omega$, and $\P$ is a complete probability measure on $(\Omega,{\cal F})$.\footnote{The assumptions that $\Omega$ is a Polish space and $\P$ is a complete probability measure are only used in the proof of Theorem \ref{teo1 correctors} to show that the functions $u^\lambda_\pm$ are jointly measurable in $\R\times\Omega$.} We will denote by ${\cal B}$ the Borel $\sigma$-algebra on $\R$ and equip the product space $\R\times \Omega$ with the product $\sigma$-algebra ${\mathcal B}\otimes {\cal F}$.

We will assume that $\P$ is invariant under the action of a one-parameter group $(\tau_x)_{x\in\R}$ of transformations $\tau_x:\Omega\to\Omega$. More precisely, we assume that the mapping
$(x,\omega)\mapsto \tau_x\omega$ from $\R\times \Omega$ to $\Omega$ is measurable, $\tau_0=id$, $\tau_{x+y}=\tau_x\circ\tau_y$ for every $x,y\in\R$, and $\P\big(\tau_x (E)\big)=\P(E)$ for every $E\in{\cal F}$ and $x\in\R$. We will assume in addition that the action of $(\tau_x)_{x\in\R}$ is {\em ergodic}, i.e., any measurable function $\varphi:\Omega\to\R$ satisfying $\P(\varphi(\tau_x\omega) = \varphi(\omega)) = 1$ for every fixed $x\in{\R}$ is almost surely equal to a constant.
 If $\varphi\in L^1(\Omega)$, we write $\E(\varphi)$ for the mean of $\varphi$ on $\Omega$, i.e. the quantity
$\int_\Omega \varphi(\omega)\, d \P(\omega)$.

A measurable function $f:\R\times \Omega\to \R$ is said to be {\em stationary} with respect to $(\tau_x)_{x\in\R}$ if  $f(x,\omega)=f(0,\tau_x\omega)$ for all $(x,\omega)\in\R\times\Omega$. Moreover, whenever the action of $(\tau_x)_{x\in\R}$ is ergodic, we refer to $f$ as a stationary ergodic function.\smallskip

In this paper, we will consider an equation of the form 
\begin{equation}\label{eq:generalHJ}
\partial_t u=a(x,\omega) \partial^2_{xx} u +G(\partial_x u)+  \beta V(x,\omega),\quad (t,x)\in (0,+\infty)\times\R,
\end{equation}
where $\beta\geqslant 0$, and $a:\R\times\Omega\to [0,1]$, $V:\R\times\Omega\to [0,1]$ are stationary ergodic processes satisfying the following assumptions, 
%
%
for some constant $\kappa > 0$:
\begin{itemize}
\item[(A)]  $\sqrt{a(\,\cdot\,,\omega)}:\R\to [0,1]$\ is $\kappa$--Lipschitz continuous for all $\omega\in\Omega$;\footnote{
Note that (A) implies that $a(\cdot,\omega)$ is $2\kappa$--Lipschitz in $\R$ for all $\omega\in\Omega$.  Indeed, 
for all $x,y\in\R$ we have 
\[
|a(x,\omega) - a(y,\omega)| = |\sqrt{a(x,\omega)} + \sqrt{a(y,\omega)}||\sqrt{a(x,\omega)} - \sqrt{a(y,\omega)}| \leqslant 2\kappa|x-y|.
\]
}
\medskip
\item[(V)] $V(\,\cdot\,,\omega):\R\to [0,1]$\ is $\kappa$--Lipschitz
  continuous for all $\omega\in\Omega$.\\
\end{itemize}

As for the nonlinearity $G$, we assume that it belongs to the class $\Ham$ defined as follows. 
\begin{definition}\label{def:Ham}
	A function $G:\R\to\R$ is said to be in the class $\Ham(\alpha_0,\alpha_1,\gamma)$ if it satisfies the following conditions, for some constants $\alpha_0,\alpha_1>0$ and $\gamma>1$:
	\begin{itemize}
		\item[(G1)] $\alpha_0|p|^\gamma-1/\alpha_0\leqslant G(p)\leqslant\alpha_1(|p|^\gamma+1)$\ for all ${p\in\R}$;\medskip
		\item[(G2)] $|G(p)-G(q)|\leqslant\alpha_1\left(|p|+|q|+1\right)^{\gamma-1}|p-q|$\ for all $p,q\in\R$.\smallskip
\end{itemize}
We will denote by $\Ham$ the union of the families $\Ham(\alpha_0,\alpha_1,\gamma)$, where $\alpha_0,\alpha_1$ vary in $(0,+\infty)$ and $\gamma$ in $(1,+\infty)$. 
\end{definition}

Solutions, subsolutions and supersolutions of \eqref{eq:generalHJ}
will be always understood in the viscosity sense, see
\cite{CaCa95, users,barles_book,bardi}. Assumptions (A), (V), (G1)-(G2)
guarantee well-posedness in $\D{UC}(\ccyl)$ of the Cauchy problem for
the parabolic equation \eqref{eq:generalHJ}, as well as Lipschitz
estimates for the solutions under appropriate assumptions on the
initial condition, see \cite[Appendix A]{DK22} for more
details.

      The purpose of this paper is to establish a homogenization
      result for equation \eqref{eq:introHJ} when the nonlinearity $G$ is quasiconvex, namely 
\begin{itemize}
\item[(qC)] \quad $\{p\in\R\,:\,G(p)\leqslant \lambda\}$ is a (possibly empty) convex set for every $\lambda\in\R$. 
\end{itemize}
We succeeded in doing this under the additional hypothesis that the pair $(a,V)$ satisfies the following condition, 
termed {\em scaled hill condition}:
\begin{itemize}
\item[(S)]  for every $h\in (0,1)$ and $y>0$, there exists a set $\Omega(h,y)$ of probability 1 such that, 
for every $\omega\in \Omega(h,y)$, 
there exist $\ell_1<\ell_2$ and $\delta>0$ (depending on $\omega$) such that 
\begin{itemize}
\item[(a)] \quad $\displaystyle \int_{\ell_1}^{\ell_2} \frac{dx}{a(x,\omega)\vee\delta} = y$,\medskip
\end{itemize}
\begin{itemize}
\item[(h)]\ {\em (hill)} \quad $V(\,\cdot\,,\omega)\geqslant h\quad\hbox{on $[\ell_1,\ell_2]$}$.\medskip
\end{itemize}
\end{itemize}

\begin{remark}\label{rem:vac}
When $a>0$, the above scaled hill condition is equivalent to the one adopted in \cite{Y21b}. When 
$a\equiv 0$, it reduces to requiring that ${\esssup_{x\in\R} V(x,\omega) = 1}$ almost surely, since item (a) is trivially satisfied by any interval $(\ell_1,\ell_2)$ provided $\delta>0$ is chosen small enough.
%
%
\end{remark}

%
%

Our main result reads as follows.

\begin{theorem}\label{thm:genhom}
	Suppose $a$ and $V$ satisfy (A), (V) and (S), $\beta\geqslant 0$, and $G\in\Ham$ satisfies (qC). Then, 
the viscous HJ equation \eqref{eq:introHJ} homogenizes, i.e., there exists a continuous and coercive function 
$\HV(G):\R\to\R$, called {\em effective Hamiltonian}, and a set $\hat\Omega$ of probability 1 such that, for every uniformly continuous function $g$ on $\R$ and every $\omega\in\hat\Omega$, the solutions $u^\epsilon(\cdot,\cdot,\omega)$ of \eqref{eq:introHJ} satisfying $u^\epsilon(0,\,\cdot\,,\omega) = g$ converge,
    locally uniformly on $[0, +\infty)\times \R$ as $\epsilon\to 0^+$, to the unique solution $\ol{u}$ of 
    \begin{eqnarray*}
   \begin{cases}
    \partial_t \ol{u} = \HV(G)(D\ol{u}) & \hbox{in $(0,+\infty)\times\R$}\\
    \ol{u}(0,\,\cdot\,) = g & \hbox{in $\R$}.
    \end{cases}
    \end{eqnarray*}
Furthermore, $\HV(G)$ belongs to $\Ham$, it is quasiconvex, $\HV(G)(\R)=[\min (G+\beta),+\infty)$ and 
$$
\HV(G)^{-1}\big(\min(G+\beta)\big)=[\theta_-,\theta_+]
\qquad 
\hbox{for some\ \ $\theta_-\leqslant \theta_+$.}
$$ 
\end{theorem}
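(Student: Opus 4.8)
\textbf{Proof strategy for Theorem \ref{thm:genhom}.}
The plan is to reduce to a technically convenient subclass of quasiconvex nonlinearities, build correctors with controlled derivatives, and then pass to the limit using stability of viscosity solutions. First I would carry out the reduction announced in Section \ref{sec:outline}: given $G\in\Ham$ satisfying (qC), one approximates $G$ from below (or above) by quasiconvex functions $G^\eta\in\Ham$ that are strictly monotone on each side of a flat minimum set, enjoy uniform growth and Lipschitz constants, and converge locally uniformly to $G$. By the stability of viscosity solutions and the comparison principle for \eqref{eq:generalHJ} (guaranteed by (A), (V), (G1)--(G2), as recalled after Definition \ref{def:Ham}), homogenization for the $G^\eta$ with effective Hamiltonians $\HV(G^\eta)$ would pass to the limit provided one controls the convergence $\HV(G^\eta)\to\HV(G)$; the monotonicity of $\eta\mapsto\HV(G^\eta)$ and the a priori bounds from (G1)--(G2) make this a sandwiching argument. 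This is Theorem \ref{teo reduction}.

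Next, for $G$ in the reduced class I would establish the existence and uniqueness of sublinear correctors: for each slope $\theta$ outside the flat part, a stationary function $x\mapsto\chi_\theta(x,\omega)$ with stationary derivative $p_\theta(x,\omega)$ solving $a\,\partial_{xx}\chi+G(\theta+\partial_x\chi)+\beta V=\HV(G)(\theta)$ in the viscosity sense, with $\E(p_\theta)=0$ and with $\theta+p_\theta$ confined to the appropriate branch of $G$. In the nondegenerate case $a>0$ this is exactly the ODE construction of \cite{Y21b}; the point of the present paper is to reinterpret it in the viscosity sense so that one can let $a$ degenerate. Concretely, one introduces the discount approximations $u^\lambda_\pm$ (the functions referenced in the footnote and in Theorem \ref{teo1 correctors}), solving $\lambda u^\lambda + (\text{stationary HJ operator}) = 0$, proves uniform Lipschitz bounds via the deterministic PDE estimates of Section \ref{sec:PDE results} together with the scaled hill condition (S) — which is precisely what prevents the effective Hamiltonian from being trivial by forcing the potential to reach height $h$ on intervals of prescribed ``$a$-length'' $y$, cf.\ Remark \ref{rem:vac} — and then extracts correctors by a Birkhoff-type / subadditive limit as $\lambda\to0^+$, using completeness of $\P$ and that $\Omega$ is Polish for the joint measurability. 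The effective Hamiltonian is read off as $\HV(G)(\theta)=-\lim_{\lambda\to0^+}\lambda u^\lambda_\pm(0,\omega)$, and its qualitative properties ($\HV(G)\in\Ham$, quasiconvex, $\HV(G)(\R)=[\min(G+\beta),+\infty)$, and the flat minimum set $[\theta_-,\theta_+]$) follow from the derivative bounds on the correctors: the branch on which $\theta+p_\theta$ lives dictates on which side of its minimum $\HV(G)$ increases, and the growth bound (G1) propagates to $\HV(G)$ through the sublinearity of the correctors. The degenerate case $a\equiv0$ on a set of positive measure has to be handled by the stability argument, viewing it as a limit $a+\delta\downarrow a$ and using that the corrector equations and their solutions pass to the limit.

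Finally, the homogenization statement itself is the standard perturbed-test-function argument of Evans: given $g\in\D{UC}(\R)$, one shows that the locally uniform limit $\ol u$ of (a subsequence of) $u^\epsilon(\cdot,\cdot,\omega)$ is a viscosity solution of $\partial_t\ol u=\HV(G)(D\ol u)$ with $\ol u(0,\cdot)=g$, by testing against $\phi(t,x)+\epsilon\chi_{D\phi(t_0,x_0)}(x/\epsilon,\omega)$ at a maximum/minimum point; the corrector's defining equation converts the equation for $u^\epsilon$ into the effective one up to $o(1)$ errors, where one uses the uniform Lipschitz bounds to get compactness and the sublinearity of $\chi$ to kill the corrector's contribution to initial/boundary data. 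Uniqueness of $\ol u$ follows from the comparison principle for the effective equation, which holds because $\HV(G)$ is continuous and coercive. The convergence of the full family (not just subsequences) then follows from uniqueness of the limit. \textbf{Main obstacle.} The hard part is the corrector construction in the degenerate regime: when $a$ vanishes the equation loses uniform ellipticity, the ODE methods of \cite{Y21b} break down, and one must obtain the uniform-in-$\lambda$ Lipschitz and branch-confinement estimates purely by viscosity/comparison techniques, crucially exploiting condition (S) in its scaled form (integral of $1/(a\vee\delta)$ equal to $y$) rather than the naive requirement $\esssup V=1$; marshalling these deterministic PDE estimates (Section \ref{sec:PDE results}) so that they survive the limit $\delta\downarrow0$ is where the real work lies.
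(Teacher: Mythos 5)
Your overall architecture (reduce to a rigid subclass of $G$, build stationary correctors with branch-confined derivatives, handle the degenerate regime by approximating $a$ by $a\vee 1/n$ and passing to the limit by stability) matches the paper's in broad outline, but there are two concrete places where the proposal goes wrong or leaves a genuine gap.

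First, you misread the role of the parameter $\lambda$. In the paper the functions $u^\lambda_\pm$ are \emph{not} solutions of a discounted problem $\lambda u^\lambda + (\text{HJ operator}) = 0$; the superscript $\lambda$ is the \emph{level} of the cell problem $a(x,\omega)u'' + G(u') + \beta V(x,\omega) = \lambda$, and there is no $\lambda\to 0^+$ limit anywhere. Consequently your formula $\HV(G)(\theta) = -\lim_{\lambda\to 0^+}\lambda u^\lambda_\pm(0,\omega)$ is simply not the definition used here. The actual mechanism is: for each fixed $\lambda \geqslant \beta$, one constructs the pair $u^\lambda_\pm$ with $(u^\lambda_-)' \in [G_-^{-1}(\lambda), G_-^{-1}(\lambda-\beta)]$ and $(u^\lambda_+)' \in [G_+^{-1}(\lambda-\beta), G_+^{-1}(\lambda)]$, proves uniqueness on each connected component of $\{a>0\}$ via Gronwall (Proposition \ref{prop pre-uniqueness}) and pointwise identities where $a=0$ (Proposition \ref{prop pointwise solution}), which forces stationarity of the derivative, and then sets $\theta_\pm(\lambda) := \E[\partial_x u^\lambda_\pm(0,\omega)]$. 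Strict monotonicity and Lipschitz regularity of $\lambda\mapsto\theta_\pm(\lambda)$ (Theorem \ref{teo2 correctors}) let one \emph{invert} this map to define $\HV(G)$ on $(-\infty,\theta_-(\beta)]\cup[\theta_+(\beta),+\infty)$, while Proposition \ref{prop consequence existence corrector} identifies $\HV^L(\theta_\pm(\lambda)) = \HV^U(\theta_\pm(\lambda)) = \lambda$, where $\HV^{L},\HV^{U}$ are the long-time limits from \eqref{eq:infsup}. Homogenization then reduces to showing $\HV^L=\HV^U$ via \cite[Lemma 4.1]{DK17}, not via a perturbed-test-function argument.

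Second, and more seriously, you have no mechanism for the flat part $\theta\in(\theta_-(\beta),\theta_+(\beta))$. You write that the flat minimum set ``follows from the derivative bounds on the correctors,'' but for $\theta$ strictly between $\theta_-(\beta)$ and $\theta_+(\beta)$ there need \emph{not} be any corrector (the branch-confinement forces $\E[\partial_x u^\beta_\pm]$ to land on the endpoints), so both the corrector-based identification of $\HV(G)$ and a perturbed-test-function argument break down there. This is exactly where the scaled hill condition does its substantive work: Proposition \ref{prop lower bound} gives the lower bound $\HV^L(G)\geqslant\min(G+\beta)=\beta$, and Proposition \ref{prop flat part} produces, using Lemma \ref{lem:cerca2}, a Lipschitz \emph{supersolution} $w_\eps$ to the level-$(\beta+O(\eps))$ cell problem whose derivative interpolates between $\partial_x u^\beta_-$ and $\partial_x u^\beta_+$ across an interval where $u^\beta_+{}' - u^\beta_-{}'$ is small and $\int 1/a$ is large, and whose asymptotic slopes at $\pm\infty$ straddle $\theta$; this is fed into the comparison principle for the time-dependent equation to get $\HV^U(G)(\theta)\leqslant\beta$. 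A crucial subtlety your plan doesn't contemplate: the interpolation step needs $G$'s quasiconvexity to control $G$ of a convex combination of $\ul f$ and $\ol f$, and when $a$ vanishes at a point where both $u^\beta_\pm$ are differentiable, one must check directly (Proposition \ref{prop pointwise solution} plus quasiconvexity) that the glued function is still a subsolution at that point. Without this separate construction, the theorem would be proved only outside the flat part.

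A smaller remark: the uniform Lipschitz bound on solutions of the cell problem (Proposition \ref{prop regularity solutions}) is purely deterministic and does not use (S); you placed the scaled hill condition in the wrong step. In the paper (S) enters only through Proposition \ref{prop lower bound} and Lemma \ref{lem:cerca2}, both of which feed into Proposition \ref{prop flat part}.
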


We remark that the effective Hamiltonian $\HV(G)$ also depends on the pair $(a, V)$ and the constant $\beta$. 
We do not keep track of this in our notation since they all remain fixed throughout the paper.

\subsection{Strategy outline  and a reduction}\label{sec:outline}

In this section, we will describe our strategy to prove the homogenization result stated in Theorem \ref{thm:genhom}. 
Let us denote by $u_\theta(\,\cdot\,,\,\cdot\,,\omega)$ the unique Lipschitz solution to \eqref{eq:generalHJ} with initial condition $u_\theta(0,x,\omega)=\theta x$ on $\R$, and let us introduce the following deterministic quantities, defined almost surely on $\Omega$, see \cite[Proposition 3.1]{DK22}:
\begin{eqnarray}\label{eq:infsup}
	\HV^L(G) (\theta):=\liminf_{t\to +\infty}\ \frac{u_\theta(t,0,\omega)}{t}\quad\text{and}\quad 
	\HV^U(G) (\theta):=\limsup_{t\to +\infty}\ \frac{u_\theta(t,0,\omega)}{t}.
\end{eqnarray}
In view of \cite[Lemma 4.1]{DK17}, in order to prove homogenization, it suffices to show that $\HV^L(G)(\theta) = \HV^U(G)(\theta)$ for every $\theta\in\R$. If this occurs, their common value is denoted by $\HV(G)(\theta)$. The function $\HV(G):\R\to\R$ is called the effective Hamiltonian associated with $G$. It has already appeared in the statement of Theorem \ref{thm:genhom}.

Note that the above observation readily implies homogenization when $\beta=0$. Indeed, in this instance, $u_\theta(t,x,\omega)=\theta x + tG(\theta)$ for all $(t,x)\in\ccyl$ and $\theta \in\R$, so $\HV(G) = G$. Hence, to prove our homogenization result, it suffices to consider $\beta>0$. \smallskip

The first step  consists in reducing to the case when $G$ belongs 
to the following subclass of $\Ham$.

\begin{definition}\label{def:Hamtwo}
	A function $G\in\Ham$ is said to be in the class $\Hamzero$ (where the subscript {\em sqc} stands for {\em strictly quasi--convex}) if it satisfies the following additional conditions, for some $\eta>0$:
	\begin{itemize}
		\item[(G3)] $G(0)= 0$;\smallskip
		\item[(G4)]  for every $p_2>p_1\geqslant 0$ 
		\[
		G(p_2)-G(p_1)\geqslant \eta |p_2-p_1|
		\quad
		\hbox{and}
		\quad
		G(-p_2)-G(-p_1)\geqslant \eta |p_2-p_1|
		\]	
		\end{itemize}
\end{definition}
\smallskip

This is motivated by the following fact.\smallskip

\begin{theorem}\label{teo reduction}
If Theorem \ref{thm:genhom} holds for every $G\in\Hamzero$, then it holds for every $G\in\Ham$.
\end{theorem}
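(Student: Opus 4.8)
The plan is to reduce a general quasiconvex $G\in\Ham$ to one in $\Hamzero$ by two successive normalizations: a vertical/horizontal shift to arrange $G(0)=0$ at the minimum, and a perturbation to gain strict monotonicity (G4), showing in each case that homogenization transfers back.

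First I would reduce to the case where $G$ attains its minimum at $p=0$ with value $0$. Let $p_0$ be a point where the quasiconvex $G$ attains its minimum (it exists by superlinearity (G1)), and set $\widetilde G(p):=G(p+p_0)-G(p_0)$. A solution $u(t,x,\omega)$ of \eqref{eq:generalHJ} with nonlinearity $G$ corresponds, via $u(t,x,\omega)=\widetilde u(t,x,\omega)+p_0 x + G(p_0)\,t$, to a solution $\widetilde u$ of the equation with nonlinearity $\widetilde G$ and the same $(a,V,\beta)$; this is immediate from the structure of \eqref{eq:generalHJ} since the diffusion and potential terms are unaffected by adding an affine-in-$x$, linear-in-$t$ function and the gradient is simply shifted. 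In terms of the quantities \eqref{eq:infsup}, this gives $\HV^{L}(G)(\theta)=\HV^{L}(\widetilde G)(\theta-p_0)+G(p_0)$ and likewise for $\HV^{U}$, so homogenization for $\widetilde G$ (equality of the two) yields homogenization for $G$, with $\HV(G)(\theta)=\HV(\widetilde G)(\theta-p_0)+G(p_0)$; one also checks $\widetilde G\in\Ham$ (with possibly different constants) and $\widetilde G$ is still quasiconvex, and $\min(\widetilde G+\beta)=\min(G+\beta)-G(p_0)$, so the structural conclusions about $\HV(G)(\R)$ and the flat part also transfer. Thus we may assume (G3) holds and $0$ is a minimum point of $G$.

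Next I would handle (G4) by a perturbation-and-stability argument, which is where the viscosity machinery advertised in the introduction comes in. Given $G\in\Ham$ quasiconvex with $G(0)=\min G=0$ but not necessarily strictly monotone on the two half-lines, set $G_\eta(p):=G(p)+\eta\,\phi(p)$ where $\phi$ is a fixed convex function with $\phi(0)=0$, $\phi$ strictly increasing on $[0,\infty)$ and strictly decreasing on $(-\infty,0]$, of growth strictly slower than $|p|^\gamma$ (e.g.\ $\phi(p)=\sqrt{1+p^2}-1$, or $\phi(p)=\eta|p|$ if one prefers a Lipschitz-type perturbation — any choice keeping $G_\eta\in\Ham$ works). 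For each $\eta>0$, $G_\eta$ is quasiconvex (sum of quasiconvex $G$ and convex $\phi$, with the same minimizer $0$), satisfies (G1)--(G2) with constants uniform for $\eta\in(0,1]$, satisfies (G3), and satisfies (G4) with parameter $\eta\,c_\phi$ where $c_\phi>0$ is the modulus of strict monotonicity of $\phi$; hence $G_\eta\in\Hamzero$ and, by hypothesis, Theorem~\ref{thm:genhom} applies to it, giving an effective Hamiltonian $\HV(G_\eta)$. I would then let $\eta\to 0^+$. Since $G_\eta\downarrow G$ locally uniformly, the solutions $u^\epsilon_\eta$ of \eqref{eq:introHJ} with nonlinearity $G_\eta$ are ordered and converge locally uniformly to the solution $u^\epsilon$ with nonlinearity $G$ as $\eta\to0^+$ (by the comparison principle and stability for \eqref{eq:generalHJ}), and similarly $\HV(G_\eta)$ is monotone in $\eta$ and locally bounded, so it converges pointwise to some coercive function $\overline H$. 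The point is then to interchange the limits $\epsilon\to0^+$ and $\eta\to0^+$: using the known convergence $u^\epsilon_\eta\to\overline u_\eta$ (the solution of the effective equation with Hamiltonian $\HV(G_\eta)$) for each fixed $\eta$, together with the $\eta$-uniform Lipschitz bounds on $u^\epsilon_\eta$ coming from (A),(V),(G1)--(G2) (cf.\ \cite[Appendix A]{DK22}), one extracts a locally uniform limit of $u^\epsilon$ as $\epsilon\to0$ and identifies it, via the stability of viscosity solutions, as the solution of $\partial_t\overline u=\overline H(D\overline u)$; this forces $\HV^{L}(G)=\HV^{U}(G)=\overline H$, i.e.\ homogenization for $G$, with $\HV(G)=\lim_\eta\HV(G_\eta)$. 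The structural properties in the last paragraph of Theorem~\ref{thm:genhom} ($\HV(G)\in\Ham$, quasiconvex, range $[\min(G+\beta),\infty)$, flat on a closed interval) then pass to the limit: quasiconvexity is preserved under pointwise limits, and the identity $\min(G_\eta+\beta)=\min(G+\beta)$ (all minima at $0$, $\phi(0)=0$) pins down the bottom of the range, while the flat parts $[\theta_-^\eta,\theta_+^\eta]$ are nested and converge to the flat part of $\HV(G)$.

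\textbf{Main obstacle.} The delicate point is the double-limit interchange in the second reduction: a priori we only know $u^\epsilon_\eta\to\overline u_\eta$ for each fixed $\eta$ (not uniformly in $\eta$) and $u^\epsilon_\eta\to u^\epsilon$ for each fixed $\epsilon$, and we want $u^\epsilon\to\overline u$. The resolution is precisely the stability property of viscosity solutions emphasized in the introduction: rather than quantifying rates, one takes half-relaxed limits $\overline u(t,x):=\limsup^*_{\epsilon\to0}u^\epsilon$ and $\underline u:=\liminf_*$, shows they are respectively a sub- and supersolution of the effective equation with Hamiltonian $\overline H=\lim_\eta\HV(G_\eta)$ — using that $u^\epsilon\le u^\epsilon_\eta$ with $u^\epsilon_\eta\to\overline u_\eta$ and $\overline u_\eta\to$ the solution with Hamiltonian $\overline H$ on one side, and an analogous lower bound obtained by comparing $G$ from below on the relevant compact gradient range on the other — and concludes by the comparison principle for the effective equation that $\overline u=\underline u=\overline u$. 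The $\eta$-uniform coercivity of $\HV(G_\eta)$ (hence of $\overline H$), which follows from the $\eta$-uniform lower bound in (G1), is what guarantees that the limiting effective equation is well-posed and that $\overline H\in\Ham$.
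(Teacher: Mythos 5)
Your proposal follows the same two-step reduction as the paper: first normalize so that $G(0)=\min G=0$ by shifting the argument and subtracting a constant, then perturb by a convex strictly monotone function to gain (G4) and pass to the limit. The first step is identical to the paper's. For the second step the paper uses the specific perturbation $G_n(p)=G(p)+|p|/n^2$ and immediately delegates both the transfer of homogenization and the convergence $\HV(G_n)\to\HV(G)$ (plus the transfer of the structural properties) to a pre-existing perturbative stability result, \cite[Proposition B.3 and Theorem B.4]{DKY23}, after noting that $\|G-G_n\|_{L^\infty([-n,n])}<1/n$ and that $G$ and all $G_n$ lie in a single class $\Ham(\alpha_0,\alpha_1,\gamma)$. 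You instead re-derive that stability from scratch via half-relaxed limits: the one-sided ordering $G\leqslant G_\eta$ gives $\limsup^* u^\epsilon\leqslant\bar u_\eta$, the two-sided bound $G_\eta-\delta_\eta\leqslant G\leqslant G_\eta$ on the relevant compact gradient range (uniform in $\epsilon,\eta$ by the a priori Lipschitz estimates) gives the matching lower bound, and Dini plus comparison for the effective equation close the argument. Both routes are correct; the paper's is more economical because it cites the lemma, whereas yours is more self-contained at the cost of re-proving it. Two points you should make explicit to have a complete argument: (i) the full-probability event on which homogenization holds for each $G_\eta$ depends on $\eta$, so you must take a countable sequence $\eta_n\downarrow0$ and intersect; and (ii) the uniform class constants ($G_\eta\in\Ham(\alpha_0,\alpha_1,\gamma)$ for all small $\eta$) are what give the $\eta$-uniform Lipschitz bounds on $u^\epsilon_\eta$ and the uniform coercivity of $\HV(G_\eta)$ that you invoke — worth stating rather than leaving implicit, since the paper makes the analogous point explicitly.
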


\begin{proof}
Pick $G\in\Ham$. By (G1)--(G2),  there exists a $p_{\rm{min}}\in\R$ such that 
$G(p_{\rm{min}})=\min G$. Let us set $\widetilde G(\,\cdot\,):=G(\,\cdot\, + p_{\rm{min}}) - \min G$. An easy computation shows that Theorem \ref{thm:genhom}  holds for $G$ if and only if it holds for $\widetilde G$, and, in this instance, 
we have $\HV(\widetilde G)(\theta)=\HV(G)(\theta+ p_{\rm{min}})-\min G$ for all $\theta\in\R$. Indeed, if $\tilde u_\theta$ denotes the the unique Lipschitz solution to \eqref{eq:generalHJ} with $\tilde G$ in place of $G$ and initial condition $\tilde u_\theta(0,x,\omega)=\theta x$ on $\R$, then the function $u:=\tilde u_\theta +p_{\rm{min}}x+t\min G$ solves equation \eqref{eq:generalHJ} with initial condition $u(0,x,\omega)=(\theta+p_{\rm{min}})x$. Hence, without any loss of generality, it suffices to prove the assertion for a $G\in\Ham$ additionally satisfying $G(p)\geqslant G(0)=0$ for all $p\in\R$. For every $n\in\N$, let us set $G_n(p):=G(p)+|p|/n^2$ for all $p\in\R$. Then the functions $G$ and $(G_n)_n$ belong to $\Ham(\alpha_0,\alpha_1,\gamma)$ for some constants $\alpha_0,\alpha_1>0$ and $\gamma>1$. Furthermore, 
$G_n\in\Hamzero$, hence equation \eqref{eq:introHJ} homogenizes with $G:=G_n$ for each $n\in\N$. Since  
$\|G-G_n\|_{L^\infty([-n,n])}<1/n$ for all $n\in\N$, we infer that \eqref{eq:introHJ} homogenizes for $G$ as well 
in view of \cite[Theorem B.4]{DKY23}. Furthermore, the effective Hamiltonians $\HV(G_n)$ all belong to   
$\Ham(\overline\alpha_0,\overline\alpha_1,\overline\gamma)$ for possible different constants $\overline\alpha_0,\overline\alpha_1>0$ and $\overline\gamma>1$, and converge to $\HV(G)$, locally uniformly in $\R$, as $n\to +\infty$, see Proposition B.3 and Theorem B.4 in \cite{DKY23}. This implies that $\HV(G)$ belongs to 
$\Ham(\overline\alpha_0,\overline\alpha_1,\overline\gamma)$ as well and it inherits  from the functions $\HV(G_n)$ the properties listed in the remainder of the statement of Theorem \ref{thm:genhom}. 
\end{proof}

Let us therefore assume that $G\in\Hamzero$. Let us denote by $G_-:(-\infty,0]\to [0,+\infty)$ and 
$G_+:[0,+\infty)\to [0,+\infty)$ the functions defined as follows:
\[
G_-(p):=G(p)\quad\hbox{for all $p\leqslant 0$,}
\qquad
G_+(p):=G(p)\quad\hbox{for all $p\geqslant 0$.}
\]
Condition (G4) means that $G_-$ is strictly decreasing on $(-\infty,0]$ and $G_+$ is strictly increasing on $[0,+\infty)$ with a fixed rate of monotonicity.\medskip

In order to prove that $\HV^L(\theta)=\HV^U(\theta)$ for all $\theta\in\R$, 
we will adopt the approach that was taken in \cite{KYZ20, DK22} and subsequently developed in \cite{Y21b}.
It consists in showing the existence of a viscosity (Lipschitz) solution $u(x,\omega)$ with stationary derivative
for the following stationary equation associated with \eqref{eq:generalHJ}, namely 
\begin{equation}\label{eq0:cellODE}
	a(x,\omega)u''+G(u')+\beta V(x,\omega)=\lambda\qquad\hbox{in $\R$}
\end{equation}
 for $\P$-a.e. $\omega\in\Omega$. Such solutions are called {\em correctors} for the role they play in homogenization. In fact, the following fact holds.

\begin{prop}\label{prop consequence existence corrector}
Let $\lambda\in\R$ such that equation \eqref{eq0:cellODE} admits a Lipschitz solution $u(x,\omega)$ with stationary derivative. 
Let us set $\theta:=\E[u'(0,\omega)]$.  Then \ $\HV^L(\theta)=\HV^U(\theta)=\lambda$. 
\end{prop}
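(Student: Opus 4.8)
\textbf{Proof plan for Proposition \ref{prop consequence existence corrector}.}
The strategy is the standard ``corrector implies homogenization'' argument, carried out via the comparison principle for the parabolic Cauchy problem \eqref{eq:generalHJ}. Let $u(x,\omega)$ be the given Lipschitz solution of \eqref{eq0:cellODE} with stationary derivative, and set $\theta:=\E[u'(0,\omega)]$. The first step is to observe that, by stationarity of $u'$ and the ergodic theorem, for $\P$-a.e.\ $\omega$ one has $u(x,\omega)/x\to\theta$ as $|x|\to+\infty$; more precisely, $u(x,\omega)=\theta x+o(|x|)$ uniformly, so that for every $\eta>0$ there is $C_\eta(\omega)$ with $|u(x,\omega)-\theta x|\le \eta|x|+C_\eta(\omega)$ for all $x\in\R$. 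This is the key place where the stationarity of the derivative and ergodicity enter.

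The second step is to build sub- and supersolutions of the time-dependent equation out of the corrector. Since $u(\cdot,\omega)$ solves $a u''+G(u')+\beta V=\lambda$ in the viscosity sense, the function $(t,x)\mapsto u(x,\omega)+\lambda t$ is a (stationary-in-space) solution of \eqref{eq:generalHJ}. I would then compare it with $u_\theta$, the solution with linear initial datum $\theta x$. Using the bound $|u(x,\omega)-\theta x|\le \eta|x|+C_\eta(\omega)$ together with the Lipschitz estimates and finite-speed-of-propagation--type controls available for \eqref{eq:generalHJ} (guaranteed by (A), (V), (G1)--(G2), as recalled after Definition \ref{def:Ham} and in \cite[Appendix A]{DK22}), one sandwiches $u_\theta(0,\cdot,\omega)=\theta x$ between $u(\cdot,\omega)-C_\eta(\omega)-K\eta$-type barriers and their counterparts; the precise device is to note that $\theta x - C_\eta(\omega)\le u(x,\omega)+\eta|x|$ is not quite linear, so instead I would use that $u(x,\omega)\pm\eta|x|\pm C_\eta(\omega)$ dominate/are dominated by $\theta x$ only up to a controlled error, and absorb the $\eta|x|$ growth by the coercivity of $G$ (replacing $u$ by $u$ translated along the $\theta\mapsto\theta\pm\eta$ family of correctors is cleaner, but since we only have one corrector I would instead run the comparison on large balls and let the radius go to infinity). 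Concretely: by the comparison principle on $(0,t)\times(-R,R)$ with boundary data matched up to additive constants, one gets $|u_\theta(t,0,\omega)-u(0,\omega)-\lambda t|\le \omega(R,t)$ where the error is $o(t)$ after sending $R\to\infty$ and then using the sublinear bound; dividing by $t$ and letting $t\to+\infty$ yields $\HV^L(\theta)=\HV^U(\theta)=\lambda$.

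The cleanest way to organize the second and third steps together is the following. For fixed $\eta>0$ and the corresponding $C_\eta(\omega)$, the function $\phi^+(t,x):=u(x,\omega)+\lambda t + \eta|x| + C_\eta(\omega)$ is a supersolution of \eqref{eq:generalHJ} up to an error controlled by $\eta$ (the extra term $\eta|x|$ contributes a bounded perturbation to $G(u')$ by (G2), and $a(\eta|x|)''=0$ away from the origin while at the origin the kink is supersolution-favorable), and it dominates $\theta x$ at $t=0$; similarly $\phi^-(t,x):=u(x,\omega)+\lambda t-\eta|x|-C_\eta(\omega)$ is a subsolution up to an $O(\eta)$ error lying below $\theta x$ at $t=0$. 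Comparison then gives $u(0,\omega)+\lambda t - o(1)\cdot t\le u_\theta(t,0,\omega)\le u(0,\omega)+\lambda t+o(1)\cdot t$, where $o(1)$ is a quantity tending to $0$ as $\eta\to0^+$ (uniformly in $t$, since the $O(\eta)$ error in the equation translates, via Gronwall/comparison, into an $O(\eta)t$ error in the solution). Dividing by $t$, sending $t\to+\infty$ and then $\eta\to0^+$, we obtain $\HV^L(\theta)=\HV^U(\theta)=\lambda$, which is the assertion.

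\textbf{Main obstacle.} The delicate point is handling the non-Lipschitz barrier $\eta|x|$ and making the ``$O(\eta)$ error in the PDE $\Rightarrow O(\eta)t$ error in the solution'' estimate rigorous in the viscosity framework with a possibly degenerate diffusion $a\ge0$: one must check that adding $\pm\eta|x|$ to a viscosity solution of \eqref{eq0:cellODE} produces a viscosity super/subsolution of \eqref{eq:generalHJ} with the stated controlled defect, paying attention to the corner at $x=0$ (where the convex/concave kink works in one's favor for super-/subsolutions respectively) and to the fact that no lower bound on $a$ is available, so all estimates must be uniform as $a\to0$. This is exactly the kind of step where the ``stability of viscosity solutions'' emphasized in the introduction does the work, and where the quoted Lipschitz and comparison estimates from \cite[Appendix A]{DK22} and the reduction in \cite[Lemma 4.1]{DK17} are invoked.
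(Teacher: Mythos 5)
Your plan follows exactly the same strategy as the paper, but the paper's own proof is essentially a two-line citation: it sets $F_\theta(x,\omega):=u(x,\omega)-\theta x$, notes that $F_\theta(\cdot,\omega)$ is sublinear a.s.\ by \cite[Theorem 3.9]{DS09} (this is precisely what you derive from stationarity of $u'$ and the ergodic theorem), observes that $F_\theta$ solves \eqref{eq0:cellODE} with $G(\theta+\cdot)$ in place of $G$, and then invokes \cite[Lemma 5.6]{DK22}. What you are doing in the second half of your proposal is reconstructing the content of that cited lemma, which is the ``sublinear corrector $\Rightarrow$ comparison $\Rightarrow$ homogenization at $\theta$'' step.

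There is, however, a genuine sign error in your barrier construction, and it is not merely a detail to ``pay attention to.'' You write $\phi^+(t,x):=u(x,\omega)+\lambda t+\eta|x|+C_\eta(\omega)$ and assert that the convex kink of $\eta|x|$ at $x=0$ is supersolution-favorable. It is the opposite: a convex kink is \emph{subsolution}-favorable and a concave kink is \emph{supersolution}-favorable (adding $|x|$ creates a corner whose distributional second derivative is $+2\delta_0\ge 0$, pushing in the wrong direction for a supersolution). Concretely, a smooth $\varphi$ touching $\phi^+$ from below at $(t_0,0)$ is only constrained to have $\varphi_x(t_0,0)$ in a small interval around $u'(0,\omega)$, while $\varphi_{xx}(t_0,0)$ can be made arbitrarily large positive; if $a(0,\omega)>0$, the supersolution inequality $\lambda=\partial_t\varphi\ge a\varphi_{xx}+G(\varphi_x)+\beta V$ then fails. (By symmetry, the concave kink of $-\eta|x|$ in $\phi^-$ destroys the subsolution property there as well.) So as written neither barrier is a super-/subsolution at $x=0$ whenever the diffusion does not vanish there, and the comparison step does not go through. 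The standard repair is to smooth the kink: replace $\eta|x|$ by a smooth convex sublinearly growing function such as $\eta\sqrt{1+x^2}$, whose first derivative lies in $[-\eta,\eta]$ and whose second derivative lies in $[0,\eta]$; since $a\le 1$ and $G$ is locally Lipschitz, this makes $\phi^\pm$ honest super-/subsolutions of \eqref{eq:generalHJ} after subtracting/adding a term $C\eta\,t$, and the rest of your argument (comparison, divide by $t$, send $t\to\infty$ then $\eta\to0$) then closes without issue. With that correction your proposal is a correct, if more laborious, re-derivation of \cite[Lemma 5.6]{DK22}.
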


\begin{proof}
Let us set $F_\theta(x,\omega):=u(x,\omega)-\theta x$ for all $(x,\omega)\in\R\times\Omega$. Then $F_\theta(\cdot,\omega)$ 
is sublinear for $\P$-a.e. $\omega\in\Omega$, see for instance \cite[Theorem 3.9]{DS09}. Furthermore, it is a viscosity solution of \eqref{eq0:cellODE} with $G(\theta+\cdot)$ in place of $G$. The assertion follows in view of \cite[Lemma 5.6]{DK22}.
\end{proof}

We proceed by recalling a result that is proved in  \cite[Proposition 4.1]{DK22}. It crucially relies on the scaled hill condition (S). 

\begin{prop}\label{prop lower bound}
Let $a$ and $V$ satisfy conditions (A), (V) and (S) and $G\in\Ham$. Then 
\begin{equation*}
\HV^L(G)(\theta)\geqslant \min_\R (G+\beta)
\qquad
\hbox{for all $\theta\in\R$.}
\end{equation*}
\end{prop}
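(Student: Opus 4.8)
\textbf{Proof plan for Proposition \ref{prop lower bound}.}

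The plan is to exhibit, for each fixed $\theta\in\R$ and each $\lambda<\min_\R(G+\beta)$, a family of smooth subsolutions of the stationary equation \eqref{eq0:cellODE} (with $G(\theta+\cdot)$ in place of $G$) that grow essentially linearly with a controlled slope, and then transfer this information to the time-dependent problem via the comparison principle, which forces $u_\theta(t,0,\omega)/t$ to stay above $\lambda$ for large $t$. Since $\lambda<\min_\R(G+\beta)$ is arbitrary, this yields $\HV^L(G)(\theta)\geqslant\min_\R(G+\beta)$.

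First I would set $m:=\min_\R(G+\beta)$ and fix $\lambda<m$; put $h:=(m-\lambda)/\beta\wedge 1\in(0,1]$ (the case $\beta=0$ being trivial since then $\HV^L(G)(\theta)=G(\theta)\geqslant\min G=m$, cf.\ the observation following \eqref{eq:infsup}). Choose $y>0$ large (to be fixed at the end so that the linear growth rate of the constructed function is as small as desired), and invoke the scaled hill condition (S): for $\P$-a.e.\ $\omega$ there are $\ell_1<\ell_2$ and $\delta>0$ with $\int_{\ell_1}^{\ell_2}dx/(a(x,\omega)\vee\delta)=y$ and $V(\cdot,\omega)\geqslant h$ on $[\ell_1,\ell_2]$. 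On the interval $[\ell_1,\ell_2]$ I would build an explicit subsolution: look for $w$ with $w'=:\phi$ solving the ODE $(a\vee\delta)\phi'+G(\theta+\phi)+\beta h=\lambda$, i.e.\ $\phi'=(\lambda-\beta h-G(\theta+\phi))/(a\vee\delta)$. Because $\lambda-\beta h\leqslant\lambda\leqslant m-\beta h< m=\min(G+\beta)\leqslant \min G+\beta$, one checks $\lambda-\beta h-G(\theta+\phi)<0$ uniformly on the relevant range of $\phi$, so $\phi$ is strictly decreasing; starting from a suitable $\phi(\ell_1)>0$ and using that $G$ is superlinear (G1), $\phi$ decreases by a definite amount over the "$a\vee\delta$-arclength" $y$, and choosing $y$ large makes the total variation of $\phi$ over $[\ell_1,\ell_2]$ as large as one wants while the average slope of $w$ on that interval can be pushed below any prescribed level. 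Using $a\leqslant a\vee\delta$ and $\beta V\geqslant\beta h$ on $[\ell_1,\ell_2]$, the function $w$ is a (classical, hence viscosity) subsolution of $a w''+G(\theta+w')+\beta V\geqslant\lambda$ there; outside $[\ell_1,\ell_2]$ extend $w$ affinely matching the one-sided derivatives, which keeps the subsolution property since extending by the boundary slopes only helps (the one I keep is the small one). This produces a globally Lipschitz subsolution $w_\omega$ of \eqref{eq0:cellODE} with $G(\theta+\cdot)$, whose growth rate $\limsup_{|x|\to\infty}w_\omega(x)/|x|$ can be made arbitrarily small in absolute value, uniformly enough to proceed.

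Next I would pass to the evolution equation. The function $(t,x)\mapsto \theta x+w_\omega(x)+\lambda t$ is a subsolution of \eqref{eq:generalHJ}, and at $t=0$ it equals $\theta x+w_\omega(x)\leqslant \theta x+o(|x|)$. Comparing with $u_\theta(t,x,\omega)$ (whose initial datum is $\theta x$), a standard argument — adding a small multiple of $\sqrt{1+x^2}$ and the finite speed / growth estimates available for \eqref{eq:generalHJ} under (A),(V),(G1)--(G2) as recorded in \cite[Appendix A]{DK22} — gives $u_\theta(t,0,\omega)\geqslant \lambda t + w_\omega(0) - C$ for all $t>0$, whence $\HV^L(G)(\theta)=\liminf_{t\to\infty}u_\theta(t,0,\omega)/t\geqslant\lambda$. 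Letting $\lambda\uparrow\min_\R(G+\beta)$ finishes the proof.

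The main obstacle I anticipate is the comparison step between two functions that both grow linearly at infinity: one must make sure the subsolution's slope $\theta+w_\omega'$ is controlled well enough (uniformly in the tails) that a perturbation-at-infinity argument applies and no mass "escapes." This is exactly where choosing $y$ large — so that $w_\omega$ is genuinely sublinear after subtracting the affine part, or at least has arbitrarily small linear rate — is essential, together with the a priori Lipschitz bounds for $u_\theta$ from the well-posedness theory; once these are in place the rest is routine viscosity-solution bookkeeping.
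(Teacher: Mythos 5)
The paper does not give a proof of this proposition --- it is recalled from \cite[Proposition 4.1]{DK22} --- so I assess your plan on its own terms. The architecture (use (S) to manufacture a Lipschitz $w_\omega$ with $a w''+G(\theta+w')+\beta V\geqslant\lambda$ in the viscosity sense, then compare $\theta x+w_\omega(x)+\lambda t-C$ against $u_\theta$ to get $\HV^L(G)(\theta)\geqslant\lambda$ for every $\lambda<m:=\min_\R(G+\beta)$) is the right one, but two of the specific steps are incorrect. The choice $h:=(m-\lambda)/\beta\wedge 1$ has the wrong sign: for $\phi$ to decrease along $(a\vee\delta)\phi'=\lambda-\beta h-G(\theta+\phi)$ over the whole relevant range of $\phi$ you need $\lambda-\beta h<\min G=m-\beta$, i.e.\ $h>1-(m-\lambda)/\beta$. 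Your $h$ yields $\lambda-\beta h=2\lambda-m\geqslant m-\beta$ precisely when $\lambda\geqslant m-\beta/2$, which is exactly the regime you must treat as $\lambda\uparrow m$. There, the ODE has a stable equilibrium $\phi_+$ with $G(\theta+\phi_+)=\lambda-\beta h<\lambda$, so $\phi$ relaxes to $\phi_+$ and never acquires a slope $\phi(\ell_2)$ with $G(\theta+\phi(\ell_2))\geqslant\lambda$; the affine extension on $(\ell_2,+\infty)$ then fails the subsolution inequality, since $V$ can vanish outside $[\ell_1,\ell_2]$.

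Second, the growth analysis is backwards, which is what made the final comparison step seem problematic. The affine extensions outside $[\ell_1,\ell_2]$ have $w''=0$ and possibly $V=0$, so the boundary slopes must satisfy $G(\theta+\phi(\ell_1))\geqslant\lambda$ and $G(\theta+\phi(\ell_2))\geqslant\lambda$; as WLOG $G(\theta)<\lambda$ (else $w\equiv 0$ already works), this forces $\phi(\ell_1)>0$ and $\phi(\ell_2)<0$. The extended $w_\omega$ is therefore tent-shaped, going to $-\infty$ at both ends (its asymptotic slopes $-\phi(\ell_1)$ and $\phi(\ell_2)$ are negative, not small), hence $C:=\sup_\R w_\omega<\infty$, $v(t,x):=\theta x+w_\omega(x)-C+\lambda t$ has $v(0,\cdot)\leqslant\theta x$, and the comparison principle applies directly --- no $\sqrt{1+x^2}$ perturbation or growth-at-infinity argument is needed. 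One further technical caveat: because $G$ is superlinear, the autonomous ODE for $\phi$ can reach $-\infty$ in finite $s$-time, $s(x)=\int_{\ell_1}^{x}dz/(a(z,\omega)\vee\delta)$; it is cleaner to \emph{prescribe} $\phi$ as the $s$-linear interpolation between two fixed endpoint slopes $\phi(\ell_1)>0$ and $\phi(\ell_2)<0$ chosen with $G(\theta+\phi(\ell_i))\geqslant\lambda$, so that $(a\vee\delta)\phi'\equiv -(\phi(\ell_1)-\phi(\ell_2))/y$, and then take $y$ large and $h$ close to $1$ to verify the inequality on $[\ell_1,\ell_2]$.
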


Our strategy to prove Theorem \ref{thm:genhom} for $G\in\Hamzero$ can be outlined as follows. First we remark that, in view of Propositions \ref{prop consequence existence corrector} and \ref{prop lower bound}, we need to look for correctors for $\lambda\geqslant \beta$ only. 
For every fixed $\omega\in\Omega$ and $\lambda\geqslant \beta$, we will show the existence of a unique pair of Lipschitz solutions $u_-^\lambda(\cdot,\omega)$ and $u_+^\lambda(\cdot,\omega)$ to \eqref{eq0:cellODE} satisfying $u_-^\lambda(0,\omega)=u_+^\lambda(0,\omega)=0$ and the following derivative bounds, see Theorem \ref{teo1 correctors}:
\begin{equation*}
(u^\lambda_-)'(x,\omega)\in [G_-^{-1}(\lambda), G_-^{-1}(\lambda-\beta)],
\quad
(u^\lambda_+)'(x,\omega)\in [G_+^{-1}(\lambda-\beta), G_+^{-1}(\lambda)]
\qquad
\hbox{for a.e. $x\in\R$}.
\end{equation*}
This result is deterministic, with $\omega$ treated as a fixed parameter, and does not depend on the scaled hill condition (S). The uniqueness part, which is proved in Proposition \ref{prop pre-uniqueness}, relies on a standard application of Gronwall's Lemma and depends crucially on the fact that $G_-$ and $G_+$ are strictly monotone on their domains of definition. Such a uniqueness property readily implies that the functions 
$u^\lambda_\pm$ have stationary derivatives. Next, we need show that the dependence of the functions $u^\lambda_\pm$ on $\omega$ is measurable. This is a technical but important point. For its proof, we exploit the fact that $\Omega$ is a Polish space and $\P$ is a complete probability measure, see Theorem \ref{teo1 correctors}, and this is the only point in the paper where this condition is used. All this allows us to define two functions $\theta_\pm: [\beta,+\infty)\to\R$ defined as
$\theta_-(\lambda):=\E[\partial_x u_-^\lambda(0,\omega)]$ and $\theta_+(\lambda):=\E[\partial_x u_+^\lambda(0,\omega)]$. These functions are shown to be strictly monotone and Lipschitz, see Proposition \ref{prop properties theta}. Hence, for every 
$\theta\in (-\infty,\theta_-(\beta)]\cup [\theta_+(\beta),+\infty)$ we either have $\theta=\theta_-(\lambda)$ or $\theta=\theta_+(\lambda)$ for some $\lambda\geqslant \beta$. Since, by Proposition \ref{prop consequence existence corrector}, we have$\HV^L(G)(\theta_{\pm}(\lambda))=\HV^U(G)(\theta_{\pm}(\lambda))=\lambda$, we derive that we can define $\HV(G)$ on $(-\infty,\theta_-(\beta)]\cup [\theta_+(\beta),+\infty)$ by setting 
\begin{equation}\label{def HV(G)}
\HV(G)(\theta):=\HV^L(G)(\theta)=\HV^U(G)(\theta)
\qquad
\hbox{for all $\theta \in  (-\infty,\theta_-(\beta)]\cup [\theta_+(\beta),+\infty)$.}
\end{equation}
In order to conclude, we have to extend the definition of $\HV(G)$ to the whole $\R$ by showing that the second equality in \eqref{def HV(G)} holds for $\theta\in \big(\theta_-(\beta), \theta_+(\beta)\big)$ too. This is done in Proposition \ref{prop flat part}, where we show that $\HV^L(G)(\theta)=\HV^U(G)(\theta)=\beta$\ for every $\theta\in \big(\theta_-(\beta),\theta_+(\beta)\big)$, and this holds true because the scaled hill condition (S) is in force.\medskip

\section{Deterministic viscous HJ equations}\label{sec:PDE results}

In this section we collect some deterministic results for a  viscous HJ equation of the form
\begin{equation}\label{eq PDE}
	a(x)u''(x)+G(u')+\beta V(x)=\lambda\qquad \hbox{ in $\R$,}
\end{equation}
where $\lambda\in\R$, the nonlinearity $G$ belongs to $\Ham$, and  $a,\, V$ are deterministic functions satisfying conditions (A) and (V), respectively. 
The following holds. 
\begin{prop}\label{prop regularity solutions}
Let $u\in\CC(\R)$ be a viscosity solution of \eqref{eq PDE}. Let us assume that $G\in\Ham(\alpha_0,\alpha_1,\gamma)$ for some constants $\alpha_0,\alpha_1>0$ and $\gamma>1$. Then $u$ is $K$--Lipschitz in $\R$ for some constant $K=K(\lambda,\beta, \gamma,\alpha_0,\alpha_1,\kappa)$ only depending on $\lambda, \beta,\gamma,\alpha_0,\alpha_1,\kappa$.\footnote{We recall that $\kappa$ is the Lipschitz constant appearing in conditions (A) and (V).} Furthermore, $u$ is of class $C^2$ (and hence a pointwise solution of \eqref{eq PDE}) in every open interval $I$ where $a(\cdot)$ is strictly positive. 
\end{prop}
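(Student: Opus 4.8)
The plan is to prove the two assertions separately: first the global Lipschitz bound, then the interior $C^2$ regularity on intervals where $a$ is nondegenerate.

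For the Lipschitz estimate, the key point is that viscosity sub- and supersolutions of \eqref{eq PDE} admit \emph{a priori} gradient bounds depending only on the structural constants. The cleanest route is a standard Bernstein-type / doubling-of-variables argument. Concretely, I would first observe that $u$ is locally bounded (being continuous) and then show that, for a suitable large constant $L$, the function $p\mapsto G(p)$ being coercive of order $\gamma>1$ forces any viscosity solution to be globally Lipschitz: if $u$ had arbitrarily large difference quotients, one could test the equation at a point where $\big(u(x)-u(y)\big)-L|x-y|-C$ attains a positive maximum and derive a contradiction from (G1), since the $a u''$ term has a favorable sign at an interior maximum (the usual comparison/penalization estimate for degenerate elliptic equations, which does not see the sign of $a$ because $a\ge 0$). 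Here one uses that $G(p)\ge \alpha_0|p|^\gamma-1/\alpha_0$ together with the bound $\beta V\le \beta$ and $\lambda$ fixed to get $G(u')\le \lambda+1/\alpha_0$ in the viscosity sense at a maximum point, hence $|u'|$ is bounded in terms of $\lambda,\beta,\alpha_0,\gamma$; tracking the penalization constants one sees $L$ depends only on $\lambda,\beta,\gamma,\alpha_0,\alpha_1,\kappa$. Alternatively, since the excerpt already quotes \cite[Appendix A]{DK22} for well-posedness in $\D{UC}(\ccyl)$ and Lipschitz estimates under appropriate assumptions, one may simply realize $u$ as (a component of) a stationary viscosity solution and invoke those estimates; I would phrase the argument so that it relies only on the structural bounds (G1)--(G2), (A), (V), which is what makes the constant $K$ uniform.

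For the second assertion, let $I$ be an open interval with $a>0$ on $I$; by continuity of $a$ (assumption (A) gives $\sqrt a$ Lipschitz, hence $a$ Lipschitz) the coefficient $a$ is locally bounded below by a positive constant on any compact subinterval, so \eqref{eq PDE} is a \emph{uniformly elliptic} (nondegenerate) quasilinear equation there, with $u$ already known to be Lipschitz from the first part. Writing the equation as $u''=\big(\lambda-\beta V(x)-G(u'(x))\big)/a(x)$ in the viscosity sense, the right-hand side is, a priori, only an $L^\infty$ function of $x$ (since $u'\in L^\infty$), so elliptic regularity for the linear operator $a\,\partial_{xx}$ gives $u\in C^{1,\alpha}_{\mathrm{loc}}(I)$ for every $\alpha<1$ (in 1d this is elementary: a viscosity solution of $a u''=f$ with $a$ continuous and positive and $f\in L^\infty$ is twice weakly differentiable with $u''\in L^\infty_{\mathrm{loc}}$, hence $u\in C^{1,1}_{\mathrm{loc}}$). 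Now $u'$ is (locally) Lipschitz, so $x\mapsto G(u'(x))$ is Lipschitz by (G2), and $a,V$ are Lipschitz; bootstrapping once more, $u''=\big(\lambda-\beta V-G(u')\big)/a$ is now locally Lipschitz, whence $u\in C^{2,1}_{\mathrm{loc}}(I)\subset C^2(I)$, and the equation holds pointwise. A convenient way to make the first bootstrap step rigorous without invoking heavy elliptic theory is to test against the explicit barriers built from solving the ODE $a w''=\pm\|f\|_\infty$ locally, comparing $u$ with $w$ from above and below via the comparison principle on small subintervals; this is exactly the kind of argument referenced for Proposition \ref{prop pointwise solution} and acknowledged in the paper.

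I expect the main obstacle to be the \emph{first} step of the interior bootstrap, namely upgrading $u$ from merely Lipschitz to $C^{1,1}_{\mathrm{loc}}$ on $\{a>0\}$ in the viscosity framework: one must pass from the viscosity inequalities for the degenerate-elliptic operator to genuine (a.e.\ or weak) second derivatives, using that $a$ is bounded away from $0$ and that $G(u')$ is a bona fide $L^\infty$ source term. The global Lipschitz bound and the final bootstrap from $C^{1,1}$ to $C^2$ are comparatively routine once the structural inequalities (G1)--(G2) and the Lipschitz continuity of $a,V$ are exploited.
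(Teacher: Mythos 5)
Your proposal is correct and follows essentially the same route as the paper, with small differences in which tools carry the weight. For the Lipschitz bound, the paper simply cites \cite[Theorem 3.1]{AT}; your alternative of a Bernstein/doubling argument is viable, but be aware that controlling the second-order term in Ishii's lemma for the degenerate operator $a(x)\partial_{xx}$ requires the Lipschitz continuity of $\sqrt{a}$ from (A) (this is precisely why $\kappa$ appears in the constant) — that should be said explicitly rather than asserted via a vague "favorable sign" remark, since $a$ is $x$-dependent, not constant. For the interior $C^2$ regularity, the paper's chain is: the equation plus the Lipschitz bound give $|u''|\le C$ in the viscosity sense on $I$, which is equivalent to the distributional bound by \cite{Is95}, hence $u''\in L^\infty(I)$; then $W^{2,p}$ estimates \cite[Cor.~9.18]{GilTru01} and Schauder theory \cite[Thm.~5.20]{HL97} give $C^{2,\sigma}(I)$. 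Your version replaces the last two citations with a hands-on 1d bootstrap ($u\in C^{1,1}_{\mathrm{loc}}$ forces $u''=(\lambda-\beta V-G(u'))/a$ a.e., the right-hand side is then Lipschitz, whence $u'\in C^1$ and finally $u\in C^{2,1}_{\mathrm{loc}}$), which is a perfectly sound and arguably more elementary alternative; the one ingredient you should make precise is the passage from the viscosity bound $|u''|\le C$ to the $L^\infty$ distributional bound — this is exactly the content of the \cite{Is95} result the paper invokes, and it is the step your "test against barriers" remark is meant to replace.
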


\begin{proof}
The Lipschitz character of $u$ is direct consequence of \cite[Theorem 3.1]{AT}, to which we refer for an explicit expression of the dependence of $K$ on the parameters  $\lambda, \beta,\gamma,\alpha_0,\alpha_1,\kappa$.  

Let us now assume that $a(\cdot)$ is strictly positive on some open interval $I$. Without loss of generality, we can assume that $I$ is bounded and $\inf_I a>0$. From the Lipschitz character of $u$ we infer that $-C\leq u''\leq C$ in $I$ in the viscosity sense 
for some constant $C>0$, or, equivalently, in the distributional sense, in view of  \cite{Is95}. Hence, 
$u'' \in L^\infty(I)$. The elliptic regularity theory, see \cite[Corollary 9.18]{GilTru01}, ensures that $u\in W^{2,p}(I)$ for any $p>1$
and, hence, $u\in \D{C}^{1,\sigma}(I)$ for any $0<\sigma<1$. Since $u$ is a viscosity solution to \eqref{eq PDE} in $I$, 
by Schauder theory \cite[Theorem 5.20]{HL97}, we conclude that $u\in \D{C}^{2,\sigma}(I)$ for any $0<\sigma<1$.\,
\end{proof}

Proposition \ref{prop regularity solutions} together with the next result implies in particular that any viscosity solution of \eqref{eq PDE} solves the equation at almost every point.

\begin{prop}\label{prop pointwise solution}
Let $u\in\CC(\R)$ be a viscosity supersolution (respectively, subsolution) of \eqref{eq PDE}. If $u$ is differentiable at a point $x_0\in\R$ where  $a(x_0)=0$, then 
\[
G(u'(x_0))+\beta V(x_0)\leqslant \lambda \qquad \hbox{(resp., $G(u'(x_0))+\beta V(x_0)\geqslant \lambda$).}
\]
\end{prop}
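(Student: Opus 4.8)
\textbf{Proof strategy for Proposition \ref{prop pointwise solution}.}

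The statement is a standard fact about degenerate elliptic equations, but it requires care because the viscosity inequality at a point of differentiability only gives information \emph{if} one can produce a smooth test function touching $u$ from the appropriate side at $x_0$ with controlled second derivative. The plan is to treat the supersolution case (the subsolution case being symmetric, or obtained by replacing $u$ with $-u$, $G$ with $\check G(p):=G(-p)$, and $\lambda$ with $\lambda$ — actually here it is cleanest to just redo the argument with inequalities reversed). Let $x_0$ be a point of differentiability of $u$ with $a(x_0)=0$, and set $p_0:=u'(x_0)$. The goal is to show $G(p_0)+\beta V(x_0)\le\lambda$.

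First I would introduce, for $\eps>0$, the quadratic test function
\[
\varphi_\eps(x):=u(x_0)+p_0(x-x_0)-\frac{C_\eps}{2}(x-x_0)^2,
\]
and the key point is that since $u$ is differentiable at $x_0$ with derivative $p_0$, for any prescribed modulus we have $u(x)-\big(u(x_0)+p_0(x-x_0)\big)=o(|x-x_0|)$ as $x\to x_0$; hence for a suitable choice of coefficient the function $x\mapsto u(x)-\varphi_\eps(x)$ attains a \emph{local minimum} at a nearby point. More precisely, following the classical construction (see e.g. \cite{users}), one penalizes: for each $\eps$ one picks a small closed interval $I_\eps=[x_0-r_\eps,x_0+r_\eps]$ and a constant $C_\eps$ with $C_\eps r_\eps\to 0$ such that $u-\varphi_\eps$ has an interior minimum over $I_\eps$ at some point $x_\eps$ with $x_\eps\to x_0$. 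At $x_\eps$ the supersolution property yields
\[
a(x_\eps)\,\varphi_\eps''(x_\eps)+G(\varphi_\eps'(x_\eps))+\beta V(x_\eps)\le\lambda,
\]
i.e.
\[
-a(x_\eps)\,C_\eps+G\big(p_0-C_\eps(x_\eps-x_0)\big)+\beta V(x_\eps)\le\lambda.
\]
Now I would let $\eps\to0$: by (A), $a$ is $2\kappa$-Lipschitz and $a(x_0)=0$, so $0\le a(x_\eps)\le 2\kappa|x_\eps-x_0|$, whence $0\le a(x_\eps)C_\eps\le 2\kappa\,|x_\eps-x_0|\,C_\eps$; if the construction is arranged so that $|x_\eps-x_0|\le r_\eps$ and $r_\eps C_\eps\to0$, this term vanishes in the limit. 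Meanwhile $p_0-C_\eps(x_\eps-x_0)\to p_0$ and $V(x_\eps)\to V(x_0)$ by continuity, and $G$ is continuous, so passing to the limit gives $G(p_0)+\beta V(x_0)\le\lambda$, as desired. The subsolution case is identical with $\varphi_\eps$ replaced by the upward parabola $u(x_0)+p_0(x-x_0)+\tfrac{C_\eps}{2}(x-x_0)^2$, a local maximum of $u-\varphi_\eps$, and all inequalities reversed.

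The one genuinely delicate step is the \emph{penalization/touching argument}: one must guarantee simultaneously that $x\mapsto u(x)-\varphi_\eps(x)$ has an \emph{interior} extremum on $I_\eps$ (so that the viscosity inequality applies, rather than just a boundary extremum where it does not), and that the coefficient $C_\eps$ of the parabola, while large enough to force this interior extremum, is small enough that $a(x_\eps)C_\eps\to0$. The reconciliation of these two requirements rests precisely on differentiability of $u$ at $x_0$ (which makes the deviation of $u$ from its tangent line genuinely $o(|x-x_0|)$) together with the vanishing of $a$ at $x_0$ and its Lipschitz regularity from (A): one chooses $r_\eps\to0$, lets $\omega_u(r):=\sup_{|x-x_0|\le r}|u(x)-u(x_0)-p_0(x-x_0)|=o(r)$, and takes $C_\eps$ comparable to $\omega_u(r_\eps)/r_\eps^2$, so that on one hand $\tfrac{C_\eps}{2}r_\eps^2\gtrsim\omega_u(r_\eps)$ forces the interior extremum, while on the other hand $r_\eps C_\eps\lesssim\omega_u(r_\eps)/r_\eps\to0$ — any $r_\eps$ tending to $0$ slowly enough relative to $\omega_u$ works. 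I expect this bookkeeping to be the main obstacle; everything else is continuity and the definition of viscosity supersolution.
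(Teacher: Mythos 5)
Your proof is correct and follows essentially the same strategy as the paper's: touch $u$ from below (resp.\ above) at a nearby point by a parabola, apply the viscosity inequality there, and use $a(x_0)=0$ together with the Lipschitz bound $a(x)\leqslant 2\kappa\,|x-x_0|$ from (A) and the differentiability of $u$ at $x_0$ to make the second-order term negligible in the limit.

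The only noteworthy difference is how the two proofs handle the tension you correctly flag between forcing an interior extremum and keeping $a(x_\eps)C_\eps$ small. You resolve it by tuning a single parameter, choosing $C_\eps$ comparable to $\omega_u(r_\eps)/r_\eps^2$ so that both $C_\eps r_\eps^2\gtrsim\omega_u(r_\eps)$ and $r_\eps C_\eps\to 0$; this works (with the small caveat that one should take $C_\eps$ strictly larger than $2\omega_u(r_\eps)/r_\eps^2$, say $C_\eps=2\omega_u(r_\eps)/r_\eps^2+r_\eps$, to cover the degenerate case where $u$ happens to be affine on $[x_0-r_\eps,x_0+r_\eps]$). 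The paper sidesteps the bookkeeping by keeping a genuine two-parameter family: it fixes $\eps>0$, uses $\varphi_r(x)=-\eps(x-x_0)^2/r$ on $B_r$, observes $\max_{\partial B_r}(\varphi_r-u)\leqslant r(-\eps+\omega(r))<0$ for $r$ small, extracts an interior touching point $x_r$, and passes to the limit first in $r\to 0^+$ (this leaves a bounded but nonvanishing error $O(\kappa\eps)$ in the second-order term and a perturbation of size $O(\eps)$ in the gradient, extracted by compactness) and then in $\eps\to 0^+$. The two-stage limit avoids any reference to the modulus $\omega_u$ and makes the construction entirely explicit, but the underlying mechanism is identical to yours.
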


\begin{proof}
To ease notation, denote by $H(x,p):=G(p)+\beta V(x)-\lambda$. Without any loss in generality, we can assume $x_0=0$ and $u(0)=u'(0)=0$. Then 
\begin{equation}\label{eq small o}
|u(x)|\leqslant |x|\omega(|x|)\qquad\hbox{in a neighborhood of $x=0$,}
\end{equation} 
where $\omega$ is a continuity modulus. Let us assume that $u$ is a supersolution. Fix $\eps>0$ and $r>0$ and  set $\varphi_r(x):=-\eps {x^2}/{r}$. By \eqref{eq small o}, there exists $r_0>0$ such that $\omega(r_0)<\eps$. For every $r\leqslant r_0$ we have 
\[
\max_{\partial B_r} (\varphi_r-u) 
\leqslant
r(-\eps +\omega(r))
<0=(\varphi_r-u)(0),
\]
hence there exists $x_r\in B_r$ such that $\varphi_r-u$ attains a local maximum at $x_r$. Being $u$ a supersolution of \eqref{eq PDE}, we infer
\[
-a(x_r)\dfrac{2\eps}{r}+H\left(x_r,-2\eps\frac{x_r}{r}\right)\leqslant 0.
\]
Now $|x_r/r|\leqslant 1$ and 
$\displaystyle \left|\frac{a(x_r)}{r}\right| =  \left|\frac{a(x_r)-a(0)}{r}\right|\leqslant 2\kappa  \left|\frac{x_r}{r}\right|\leqslant 2\kappa$. By sending $r\to 0^+$ and by extracting convergent subsequences, we can find $|p_\eps|\leqslant 1$ and $|\alpha_\eps|\leqslant 2\kappa$ such that
\[
2\eps \alpha_\eps +H(0,2\eps p_\eps)\leqslant 0.
\]
By sending $\eps\to 0^+$ we finally obtain $H(0,0)\leqslant 0$, as it was to be shown. The case when $u$ is a subsolution can be handled analogously.
\end{proof}

Let us now assume that $G\in\Hamzero$. For any fixed $\lambda\geqslant \beta$, we denote by
\begin{eqnarray*}
\Sol^\lambda_-&:=& \{u\in\Lip(\R)\,:\, \text{$u$ solves \eqref{eq PDE} and 
$u'(x)\in [G_-^{-1}(\lambda), G_-^{-1}(\lambda-\beta)]$ for a.e. $x\in\R$}\};\\
\Sol^\lambda_+ &:=& \{u\in\Lip(\R)\,:\, \text{$u$ solves \eqref{eq PDE} and 
$u'(x)\in [G_+^{-1}(\lambda-\beta), G_+^{-1}(\lambda)]$ for a.e. $x\in\R$}\}.\medskip
\end{eqnarray*}

\begin{prop}\label{prop pre-uniqueness}
Let $I$ be an open interval of $\R$ such that $a(\cdot)>0$ in $I$. Assume that 
$a=0$ on $\partial I$ when this set is nonempty. Let $\lambda\geqslant \beta$ and $u_1,u_2\in\Sol_+^\lambda$ (respectively,  $u_1,u_2\in\Sol_-^\lambda$). Then $u'_1(x)=u'_2(x)$ for a.e. $x\in I$. 
\end{prop}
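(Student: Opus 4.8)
The plan is to exploit the strict monotonicity of $G_+$ (resp.\ $G_-$) on its domain together with Gronwall's Lemma, working on compact subintervals of $I$. First I would note that by Proposition \ref{prop regularity solutions}, since $a(\cdot)>0$ on $I$, both $u_1$ and $u_2$ are of class $C^2$ on $I$ and are classical pointwise solutions of \eqref{eq PDE} there. Set $w:=u_1-u_2$ on $I$. Subtracting the two equations gives, pointwise on $I$,
\[
a(x)w''(x) = -\bigl(G(u_1'(x))-G(u_2'(x))\bigr) = -\int_0^1 G'\bigl(u_2'(x)+t w'(x)\bigr)\,dt \cdot w'(x),
\]
at least where $G$ is differentiable; since $G\in\Ham$ is locally Lipschitz it is differentiable a.e., but to avoid this technicality I would instead write $G(u_1')-G(u_2') = c(x)\,w'(x)$ where, by the mean value theorem applied on the interval between $u_2'(x)$ and $u_1'(x)$ (both of which lie in $[G_+^{-1}(\lambda-\beta),G_+^{-1}(\lambda)]\subset[0,+\infty)$ in the "+" case), we may take $c(x)\geqslant \eta>0$ by condition (G4). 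Thus $w$ satisfies the linear ODE $a(x)w'' + c(x)w' = 0$ on $I$ with $c(x)\geqslant \eta>0$ and $a(x)>0$.

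The key step is then to show $w'\equiv 0$ on $I$. Fix a compact subinterval $[\alpha,\beta_0]\subset I$; on it $a$ is bounded below by a positive constant and $c$ is bounded. The equation $w'' = -(c(x)/a(x))w'$ shows that $v:=w'$ solves $v' = -(c/a)v$, so $v(x) = v(x_0)\exp\bigl(-\int_{x_0}^x (c/a)\bigr)$ for any base point $x_0$. Hence $v$ is either identically zero or never zero on $I$; suppose for contradiction it is never zero, say $v>0$ on $I$ (the case $v<0$ is symmetric). Since $c/a\geqslant \eta/\sup_I a =: \mu$ could fail to be bounded below if $a$ is unbounded — but $a\leqslant 1$, so in fact $c/a \geqslant \eta$ throughout $I$. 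Therefore $v(x)\leqslant v(x_0)e^{-\eta(x-x_0)}$ for $x>x_0$ and $v(x)\geqslant v(x_0)e^{\eta(x_0-x)}$ for $x<x_0$, i.e.\ $v$ decays to the right and blows up to the left as one approaches the left endpoint of $I$. If the left endpoint of $I$ is $-\infty$, this contradicts the fact that $w=u_1-u_2$ is Lipschitz, hence $v=w'$ is bounded. If the left endpoint is a finite point $\ell$ with $a(\ell)=0$, I would argue as follows: by Proposition \ref{prop pointwise solution} the solutions $u_1,u_2$ satisfy the equation pointwise (in the sub/supersolution sense, hence with equality) at any point of differentiability where $a$ vanishes; combined with the derivative bounds defining $\Sol_+^\lambda$ this pins down $u_i'(\ell^+)$ as a one-sided limit — more precisely, the boundedness of $v$ near $\ell$ forces a contradiction with the exponential blow-up, since $v(x)\to +\infty$ as $x\to\ell^+$ is incompatible with $v=w'$ being bounded on $I$. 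Either way we get a contradiction, so $v\equiv 0$, i.e.\ $u_1'=u_2'$ everywhere on $I$, which is the assertion (and more, since we even get it everywhere rather than a.e.).

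I expect the main obstacle to be the behavior at a finite endpoint $\ell\in\partial I$ with $a(\ell)=0$: one must rule out that $w'$ blows up as $x\to\ell^+$. The clean way is precisely the observation above that $a\leqslant 1$ makes $c/a\geqslant \eta$ uniformly on all of $I$, so the exponential estimate $|v(x)|\geqslant |v(x_0)|e^{\eta(x_0-x)}$ holds right up to $\ell$; since $v=w'$ is bounded on $I$ by the Lipschitz bound of Proposition \ref{prop regularity solutions}, taking $x\to \ell^+$ forces $v(x_0)=0$ and hence $v\equiv 0$. The case $I=\R$ or $I$ a half-line is handled identically using boundedness of $w'$ at $\pm\infty$. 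A small amount of care is also needed to justify the representation $G(u_1')-G(u_2')=c(x)w'$ with $c\geqslant\eta$ when $u_1'(x)=u_2'(x)$ at some point (then the identity is trivial with, say, $c(x)=\eta$), and to ensure measurability/continuity of $c$, which follows from continuity of $u_1',u_2'$ on $I$ and of $G$ off the single point $0$, noting $u_i'\geqslant G_+^{-1}(\lambda-\beta)\geqslant 0$; if $\lambda=\beta$ the interval $[G_+^{-1}(0),G_+^{-1}(0)]$ degenerates to $\{0\}$ and the claim is immediate. The "$-$" case is entirely symmetric, using that $u_1',u_2'\in[G_-^{-1}(\lambda),G_-^{-1}(\lambda-\beta)]\subset(-\infty,0]$ and the second inequality in (G4).
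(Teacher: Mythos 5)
Your approach is essentially the paper's: pass to the derivatives, exploit the strict monotonicity (G4) to get a differential inequality for the difference, and use Gronwall/an explicit exponential formula to force the difference to vanish. But there is a genuine gap in the finite-endpoint case, and it is precisely there that the hypothesis ``$a=0$ on $\partial I$'' does real work which your argument never invokes.

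You claim that from $c/a\geqslant\eta$ (which follows from $a\leqslant 1$) one gets $|v(x)|\geqslant|v(x_0)|e^{\eta(x_0-x)}$ and hence ``$v(x)\to+\infty$ as $x\to\ell^+$''. This is false when $\ell$ is finite: as $x\to\ell^+$ the right-hand side tends to the \emph{finite} number $|v(x_0)|e^{\eta(x_0-\ell)}$, so there is no blow-up and no contradiction with the boundedness of $v=w'$. The bound $c/a\geqslant\eta$ only gives $\int_x^{x_0}(c/a)\,dz\geqslant\eta(x_0-x)$, which stays finite as $x\to\ell^+$; it suffices when $\ell=-\infty$, but not when $\ell\in\R$. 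What is actually needed is that $\int_\ell^{x_0}\frac{dz}{a(z)}=+\infty$, and this is where the assumption $a(\ell)=0$ together with the Lipschitz continuity of $a$ (condition (A)) enters: $a(z)=a(z)-a(\ell)\leqslant 2\kappa|z-\ell|$ near $\ell$, so $1/a(z)\geqslant\frac{1}{2\kappa|z-\ell|}$ has a non-integrable singularity at $\ell$. This is exactly the content of the paper's Lemma \ref{lemma infinite integral}, which your proof does not reproduce; without it, $v(x)=v(x_0)\exp\bigl(\int_x^{x_0}(c/a)\,dz\bigr)$ need not diverge as $x\to\ell^+$, and the argument stalls. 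Once you replace the crude bound $c/a\geqslant\eta$ near a finite vanishing endpoint with the divergence of $\int 1/a$, the rest of your reasoning goes through and matches the paper's proof in substance.

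As a minor aside, the appeal to Proposition \ref{prop pointwise solution} to ``pin down $u_i'(\ell^+)$'' is a red herring: that proposition concerns points interior to the domain where $a$ vanishes, while $\ell\in\partial I$, and in any case the identification $G(u_i'(\ell))+\beta V(\ell)=\lambda$ does not by itself control the rate at which $u_1'-u_2'$ could approach a nonzero limit; the divergence of $\int 1/a$ is the indispensable ingredient.
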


For the proof, we will need the following auxiliary lemma.

\begin{lemma}\label{lemma infinite integral}
Let $I$ be an open interval in $\R$ such that $a>0$ in $I$.  Assume that 
$a=0$ on $\partial I$ when this set is nonempty. For every $x_0\in I$, 
we have 
\[
\int_{I\cap (-\infty,x_0)} \frac{1}{a(x)} \, dx=+\infty
\qquad
\hbox{and}
\qquad
\int_{I\cap (x_0,+\infty)} \frac{1}{a(x)} \, dx=+\infty
\]
\end{lemma}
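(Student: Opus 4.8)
The plan is to prove $\int_{I\cap(-\infty,x_0)} a(x)^{-1}\,dx = +\infty$; the other integral is handled symmetrically. The key observation is that condition (A) controls the rate at which $a$ can vanish near the endpoints of $I$: since $\sqrt{a(\cdot)}$ is $\kappa$--Lipschitz and $a$ vanishes on $\partial I$, near an endpoint $\ell\in\partial I$ we have $\sqrt{a(x)}\le\kappa|x-\ell|$, hence $a(x)\le\kappa^2|x-\ell|^2$, which makes $1/a(x)$ behave at worst like a constant times $|x-\ell|^{-2}$ — a non-integrable singularity. So the plan is to split into cases according to whether the left endpoint of $I$ is finite or $-\infty$.

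First I would treat the case where $\ell:=\inf I$ is finite, so that $I\cap(-\infty,x_0)=(\ell,x_0)$ and $a(\ell)=0$. By (A), for $x\in(\ell,x_0)$,
\[
\sqrt{a(x)} = \sqrt{a(x)}-\sqrt{a(\ell)} \le \kappa|x-\ell| = \kappa(x-\ell),
\]
so $a(x)\le\kappa^2(x-\ell)^2$ and therefore
\[
\int_{\ell}^{x_0}\frac{dx}{a(x)} \ge \frac{1}{\kappa^2}\int_{\ell}^{x_0}\frac{dx}{(x-\ell)^2} = +\infty.
\]
Second, I would treat the case $\inf I=-\infty$, so $I\cap(-\infty,x_0)=(-\infty,x_0)$. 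Here I use that $a$ is bounded above by $1$ (it takes values in $[0,1]$), so $1/a(x)\ge 1$ on all of $(-\infty,x_0)$, and the integral of the constant $1$ over a half-line diverges. The argument for $\int_{I\cap(x_0,+\infty)}a^{-1}$ is identical: if $\sup I<+\infty$ use the quadratic-vanishing bound at the right endpoint, and if $\sup I=+\infty$ use $1/a\ge1$.

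I do not expect a serious obstacle here; this is essentially a bookkeeping lemma. The only point requiring a little care is the justification that $\sqrt{a(x)}\le\kappa|x-\ell|$ when $\ell$ is merely a boundary point of $I$ rather than an interior point — but since $a$ is continuous (being $2\kappa$--Lipschitz on all of $\R$, as noted in the footnote to (A)) and $a=0$ on $\partial I$ by hypothesis, the Lipschitz estimate $|\sqrt{a(x)}-\sqrt{a(\ell)}|\le\kappa|x-\ell|$ applies verbatim with $a(\ell)=0$. One should also note that the hypothesis $a=0$ on $\partial I$ is genuinely used: without it, $I$ could be an interval on which $a$ stays bounded away from $0$ up to the boundary, and then the integral would converge.
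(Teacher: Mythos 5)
Your proof is correct and follows essentially the same route as the paper's: split according to whether the relevant endpoint of $I$ is finite or infinite, use the bound $a\leqslant 1$ on half-lines, and use the Lipschitz-type vanishing of $a$ at a finite endpoint where $a=0$ to produce a non-integrable singularity of $1/a$. The only (cosmetic) difference is that you exploit the $\kappa$-Lipschitz bound on $\sqrt{a}$ directly, obtaining $a(x)\leqslant\kappa^2(x-\ell)^2$ and hence a $|x-\ell|^{-2}$ lower bound on $1/a$, whereas the paper uses the derived $2\kappa$-Lipschitz bound on $a$ itself to get the weaker but still sufficient $|x-\ell|^{-1}$ lower bound.
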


\begin{proof}
When $I=\R$, the assertion follows from the inequality $a(\cdot)\leqslant 1$ on $\R$. 
Next, let us consider the case when $I$ is bounded, i.e. $I=(\ell_1,\ell_2)$  for some $\ell_1<\ell_2$,  and $a(\ell_1)=a(\ell_2)=0$. 
Let us fix $x_0\in (\ell_1,\ell_2)$. For every $x\in I\setminus\{x_0\}$ we have
\[
\frac{1}{a(x)}
=
\frac{1}{a(x)-a(\ell_2)}
\geqslant 
\frac{1}{2\kappa |x-\ell_2|},
\qquad
\frac{1}{a(x)}
=
\frac{1}{a(x)-a(\ell_1)}
\geqslant 
\frac{1}{2\kappa |x-\ell_1|},
\]
By integrating on $(x_0,\ell_2)$ and on $(\ell_1,x_0)$, respectively, we get the assertion.
The remaining cases can be treated analogously. 
\end{proof}

\begin{proof}[Proof of Proposition \ref{prop pre-uniqueness}]
We prove the assertion for $u_1,u_2\in\Sol_+^\lambda$, being the other case analogous. The functions $u_1,u_2$ are of class $C^2$ in $I$ by Proposition \ref{prop regularity solutions}. We infer that the functions $f_i:=u'_i$ for $i=1,2$ satisfy the inequality 
$0\leqslant G_+^{-1}(\lambda-\beta)\leqslant f_i(x)\leqslant G_+^{-1}(\lambda)$\ for all $x\in I$ and are classical solutions of the ODE 
\begin{equation}\label{eq local ODE}
a(x)f'+G(f)+\beta V(x)=\lambda\qquad\hbox{in $I$.}
\end{equation}
Since $G$ is locally Lipschitz, we infer that either $f_1\equiv f_2$, or the graphs of the functions $f_1,f_2$ cannot cross. Let us assume for definiteness that $f_2>f_1$ in $I$. From \eqref{eq local ODE} we get
\[
0=a(x)(f_2'-f_1')+G(f_2)-G(f_1)\geqslant a(x)(f_2-f_1)'+\eta (f_2-f_1).
\]
We proceed by arguing as in the proof of Lemma 3.2 in \cite{Y21b}. 
Let us set $w:=f_2-f_1$ and let $x>x_0$ be a pair of points arbitrarily chosen
in $I$.  By Gronwall's Lemma we infer 
\begin{equation}\label{eq Gronwall}
0
\leqslant 
w(x)
\leqslant 
e^{-\int_{x_0}^x\frac{\eta}{a(z)} dz } w(x_0)
\leqslant 
e^{-\int_{x_0}^x\frac{\eta}{a(z)} dz } \left(G_+^{-1}(\lambda)-G_+^{-1}(\lambda-\beta)\right).
\end{equation}
Let us set $\ell_1:=\inf I$, where we agree that $\ell_1=-\infty$ when $I$ is not bounded from below. By sending $x_0\to \ell_1^+$ in \eqref{eq Gronwall}, we conclude that $w(x)=0$, i.e., $f_1=f_2$ in $I$ since $x$ was arbitrarily chosen in $I$.
\end{proof}

\section{About correctors}\label{sec:correctors}

Let us consider the following viscous HJ equation
\begin{equation}\label{eq:cellODE}
	a(x,\omega)u''(x)+G(u')+\beta V(x,\omega)=\lambda\qquad \hbox{ in $\R$.}
\end{equation}
Throughout this section, we will assume $a,V:\R\times\Omega\to\R$ to be stationary functions satisfying (A) and (V), respectively, and $G$ to belong to $\Hamzero$.

For $\lambda\geqslant \beta$ and $\omega\in\Omega$ fixed, we denote by
\begin{eqnarray*}
\Sol^\lambda_-(\omega) &:=& \{u\in\Lip(\R)\,:\, \text{$u$ solves \eqref{eq:cellODE} and 
$u'(x)\in [G_-^{-1}(\lambda), G_-^{-1}(\lambda-\beta)]$ for a.e. $x\in\R$}\},\\
\Sol^\lambda_+(\omega) &:=& \{u\in\Lip(\R)\,:\, \text{$u$ solves \eqref{eq:cellODE} and 
$u'(x)\in [G_+^{-1}(\lambda-\beta), G_+^{-1}(\lambda)]$ for a.e. $x\in\R$}\}.\bigskip
\end{eqnarray*}

\noindent We proceed to show the following fact. 

\begin{theorem}\label{teo1 correctors}
Let $\lambda\geqslant\beta$  be fixed. The following holds:
\begin{itemize}
\item[(i)] for every $\omega\in\Omega$, there exists a unique $u_+^\lambda\in\Sol^\lambda_+(\omega)$ with $u_+^\lambda(0,\omega)=0$;\smallskip
\item[(ii)] for every $\omega\in\Omega$, there exists a unique $u_-^\lambda\in\Sol^\lambda_-(\omega)$ with $u_-^\lambda(0,\omega)=0$.\medskip
\end{itemize}
Furthermore, the functions $u^\lambda_+,u^\lambda_-:\R\times\Omega\to\R$ are jointly measurable and have stationary derivatives. 
\end{theorem}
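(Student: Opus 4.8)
\textbf{Proof strategy for Theorem \ref{teo1 correctors}.}
The plan is to first settle the existence and uniqueness of $u_+^\lambda(\cdot,\omega)$ and $u_-^\lambda(\cdot,\omega)$ for each fixed $\omega$ — a purely deterministic matter — and then to address the joint measurability and stationarity separately. Since the two cases ($+$ and $-$) are symmetric, I would only treat $u_+^\lambda$. Uniqueness is essentially already in hand: if $u_1,u_2\in\Sol_+^\lambda(\omega)$ both vanish at $0$, then on every maximal open interval $I$ where $a(\cdot,\omega)>0$ Proposition \ref{prop pre-uniqueness} gives $u_1'=u_2'$ a.e.\ on $I$; on the complementary closed set $\{a(\cdot,\omega)=0\}$, Proposition \ref{prop pointwise solution} forces $G(u_i'(x))+\beta V(x,\omega)=\lambda$ at a.e.\ point of differentiability (using that both $\le$ and $\ge$ hold, as $u_i$ is a solution), and since $u_i'(x)\in[G_+^{-1}(\lambda-\beta),G_+^{-1}(\lambda)]$ and $G_+$ is strictly increasing (condition (G4)), the value $u_i'(x)=G_+^{-1}(\lambda-\beta V(x,\omega))$ is uniquely pinned down there too. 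Hence $u_1'=u_2'$ a.e.\ on all of $\R$, and with $u_1(0)=u_2(0)=0$ we get $u_1\equiv u_2$.

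For existence, the natural route is to build the candidate derivative $f=f^\lambda(\cdot,\omega)$ directly and then set $u_+^\lambda(x,\omega):=\int_0^x f(z,\omega)\,dz$. On $\{a(\cdot,\omega)=0\}$ the derivative is forced to be $f(x)=G_+^{-1}(\lambda-\beta V(x,\omega))$, which is a well-defined Lipschitz function of $x$ there (composition of the Lipschitz $V(\cdot,\omega)$ with the locally Lipschitz inverse $G_+^{-1}$, which is genuinely Lipschitz on the relevant compact range because of (G4)). On each maximal open interval $I=(\ell_1,\ell_2)$ with $a(\cdot,\omega)>0$ — where by hypothesis $a$ vanishes at the finite endpoints — I would solve the first-order ODE $a(x)f'+G(f)+\beta V(x)=\lambda$ by a shooting/continuation argument: the region $\{G_+^{-1}(\lambda-\beta)\le f\le G_+^{-1}(\lambda)\}$ is invariant for the flow (on the upper boundary $f=G_+^{-1}(\lambda)$ one computes $a f'=\lambda-G(f)-\beta V=\beta(1-V)\ge 0$ wait — sign check: $a f' = \lambda - G_+^{-1}(\lambda)\text{-value} - \beta V = -\beta V\le 0$, so $f$ cannot exit upward; similarly it cannot exit downward through $f=G_+^{-1}(\lambda-\beta)$), so any local solution stays trapped in a compact set and therefore extends to all of $I$. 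The monotone-comparison estimate \eqref{eq Gronwall} from Proposition \ref{prop pre-uniqueness}, run from the endpoints inward, shows that all trajectories collapse as one approaches $\ell_1^+$ or $\ell_2^-$, so the solution on $I$ is unique and, crucially, matches continuously with the forced boundary value $G_+^{-1}(\lambda-\beta V(\ell_i,\omega))$ at the endpoints. Gluing these pieces across the (countably many) intervals and the complementary set yields a globally defined, bounded, measurable $f(\cdot,\omega)$ with $G_+^{-1}(\lambda-\beta)\le f\le G_+^{-1}(\lambda)$; one then checks that its primitive $u_+^\lambda(\cdot,\omega)$ is a viscosity solution of \eqref{eq:cellODE} — it is classical on each $I$ by construction, and at points of $\{a=0\}$ the pointwise identity $G(f)+\beta V=\lambda$ combined with continuity of $f$ gives both the sub- and supersolution inequalities via Proposition \ref{prop pointwise solution} run in reverse, i.e.\ by a direct test-function computation. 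That $u_+^\lambda$ is Lipschitz is immediate from the uniform bound on $f$; alternatively invoke Proposition \ref{prop regularity solutions}.

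Stationarity of the derivative follows from uniqueness by the usual argument: for fixed $y\in\R$, the function $x\mapsto u_+^\lambda(x+y,\omega)-u_+^\lambda(y,\omega)$ lies in $\Sol_+^\lambda(\tau_y\omega)$ and vanishes at $x=0$ (because $a(x+y,\omega)=a(x,\tau_y\omega)$ and likewise for $V$), so by uniqueness it equals $u_+^\lambda(x,\tau_y\omega)$; differentiating at a common point of differentiability gives $\partial_x u_+^\lambda(y,\omega)=\partial_x u_+^\lambda(0,\tau_y\omega)$, which is the stationarity of $\partial_x u_+^\lambda$. The step I expect to be the genuine obstacle is the \emph{joint measurability} of $(x,\omega)\mapsto u_+^\lambda(x,\omega)$ in $\mathcal B\otimes\mathcal F$; this is where the hypotheses that $\Omega$ is Polish and $\P$ is complete enter. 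The clean way is to produce $u_+^\lambda$ as a pointwise (or locally uniform) limit of an explicitly measurable approximation scheme — for instance, regularize by adding a uniformly elliptic term, $a_\delta:=a\vee\delta$, solve the resulting non-degenerate equation (where classical ODE theory plus the contraction/uniqueness structure gives measurable dependence on $\omega$ through, say, a fixed-point construction or through measurable selection from the graph $\{(x,\omega,f):\ a_\delta f'+G(f)+\beta V=\lambda,\ f\in[\cdots]\}$), and then pass $\delta\to 0^+$ using the stability of viscosity solutions together with the uniform Lipschitz bound of Proposition \ref{prop regularity solutions} and the uniqueness just established to identify the limit as $u_+^\lambda$. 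A pointwise limit of jointly measurable functions is jointly measurable, and completeness of $\P$ lets one discard the null set where convergence might fail; the assumption that $\Omega$ is Polish is what makes the measurable-selection / Kuratowski–Ryll-Nardzewski step legitimate. I would organize the write-up so that this measurability argument is the last and most carefully detailed part, the deterministic existence/uniqueness being comparatively routine given the results of Section \ref{sec:PDE results}.
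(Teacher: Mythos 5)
Your uniqueness argument and your stationarity argument coincide with the paper's (decompose $\R$ into $\{a=0\}$ and the open components of $\{a>0\}$, invoke Propositions \ref{prop pointwise solution} and \ref{prop pre-uniqueness} respectively; then use uniqueness to transport the solution by $\tau_z$). Existence and measurability, however, are handled quite differently in the paper, and your proposed route has real gaps.

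For existence, the paper does \emph{not} construct $f$ by solving the ODE separately on each component of $\{a>0\}$ and gluing: it regularizes the diffusion to $a_n:=a\vee 1/n$, uses \cite[Lemma 3.2]{Y21b} (the nondegenerate result) to get solutions $u_n$ with $u_n(0)=0$ and the derivative bound, observes that the $u_n$ are equi-Lipschitz, and passes to the limit using the stability of viscosity solutions together with $a_n\ucv a$. Your shooting/gluing sketch is more delicate than it looks. The invariance you verify on the strip $[G_+^{-1}(\lambda-\beta),G_+^{-1}(\lambda)]$ is only \emph{forward} invariance in $x$; trajectories can escape backwards, so "any local solution stays trapped and therefore extends to all of $I$" is not quite right, and the existence of a trajectory trapped over the whole interval requires an extra limiting argument (essentially the content of \cite[Lemma 3.2]{Y21b}). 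More seriously, your claim that the ODE solution on $I=(\ell_1,\ell_2)$ "matches continuously with the forced boundary value $G_+^{-1}(\lambda-\beta V(\ell_i,\omega))$ at the endpoints" is not justified by the Gronwall estimate, which is a uniqueness statement, not a continuity-at-the-boundary statement. The derivative $f$ of a Lipschitz viscosity solution is merely $L^\infty$, and near $\ell_1$ one has $f'=(\lambda-G(f)-\beta V)/a$ with $a\to 0$, so $f$ may fail to have a limit at $\ell_1$; if $u$ is not differentiable at $\ell_1$, the "forced value" is not even defined there, and you would still have to check the viscosity inequalities at such points directly. The paper's regularization-plus-stability route sidesteps all of this.

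For measurability, the paper does not use a measurable selection or Kuratowski--Ryll-Nardzewski argument, and does not prove joint measurability of the regularized solutions (which, as you sketch it, just pushes the difficulty back one step). Instead it applies Lusin's theorem to the $\CC(\R)$-valued random variables $a$ and $V$ to get a closed set $F$ of measure $>1-\eps$ on which they are continuous, and then shows that $\omega\mapsto u_+^\lambda(\cdot,\omega)$ is \emph{continuous} on $F$ by the same equi-Lipschitz/stability/uniqueness argument; completeness of $\P$ then finishes. This is where the Polish-space and completeness hypotheses enter, in a cleaner way than the measurable-selection route you gesture at. Net: your deterministic uniqueness and stationarity are fine, but the existence sketch has a genuine gap at the degeneracy boundary, and the measurability sketch is both vaguer and a different mechanism than the one the paper actually uses.
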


\begin{proof}
We only prove the assertion for $u^\lambda_+$, being the other case analogous. Let us fix $\omega\in\Omega$ and let $u_1,u_2\in\Sol_+^\lambda(\omega)$. In view of Propositions \ref{prop regularity solutions}. and \ref{prop pointwise solution}, and the fact that $u'_1,u_2'\geqslant 0$ a.e. in $\R$, we have 
\[
u_1'(x)=G_+^{-1}\left(\lambda - \beta V(x,\omega)\right)=u_2'(x)
\qquad
\hbox{for a.e. $x\in\{y\in\R\,:\,a(y,\omega)=0\}$}.
\]
Let $I$ be a connected component of the set $\{y\in\R\,:\,a(y,\omega)>0\}$. Then $I$ is an open interval of $\R$  and 
$a=0$ on $\partial I$ when this set is nonempty. By Proposition \ref{prop pre-uniqueness} we infer that $u'_1(x)=u'_2(x)$ for every $x\in I$. We conclude that $u_1'(x)=u_2'(x)$ for a.e. $x\in\R$. That proves the uniqueness part. 

Let us show that $\Sol_+^\lambda(\omega)\not=\emptyset$. 
Set $a_n(\cdot,\omega):=a(\cdot,\omega)\vee 1/n$ and denote by ${u^\lambda_n}_+$ the unique solution of 
equation \eqref{eq:cellODE} with $a_n$ in place of $a$ which satisfies $u_n(0)=0$ and 
\begin{equation}\label{eq Lipschitz bound}
u_n'(x)\in [G_+^{-1}(\lambda-\beta), G_+^{-1}(\lambda)]\qquad\hbox{for all $x\in\R$}.
\footnote{Note that $u_n\in C^2(\R)$ in view of Proposition \ref{prop regularity solutions}.}
\end{equation}
The existence of such a solution $u_n$ is guaranteed by 
\cite[Lemma 3.2]{Y21b}.\footnote{Set $u_n(x):=\int_0^x f^\lambda_2(z) dz$, with $f^\lambda_2$ as in \cite[Lemma 3.2]{Y21b}.}  
The functions $(u_n)_n$ are equi-Lipschitz and locally equi-bounded, so they converge in $\CC(\R)$, up to extracting a subsequence, to a function $u$ satisfying $u(0)=0$ and \eqref{eq Lipschitz bound} for almost every $x\in\R$. Since $a_n(\cdot,\omega)\ucv a(\cdot,\omega)$ in $\R$, by the stability of the notion of viscosity solution we conclude that $u$ solves \eqref{eq:cellODE} in the viscosity sense.\footnote{We have proved above that such a solution $u$ is unique, hence the whole sequence $(u_n)_n$ is converging to $u$.}

Let us prove that $u_+^\lambda:\R\times\Omega\to\R$ is measurable with respect to the product $\sigma$-algebra ${\mathcal B}\otimes\F$. This is equivalent to showing that $\Omega\ni \omega\mapsto u_+^\lambda(\,\cdot\,,\omega)\in\CC(\R)$ is a random variable from $(\Omega,\F)$ to the Polish space $\CC(\R)$ endowed with its Borel $\sigma$-algebra, see for instance  \cite[Proposition 2.1]{DS09}. 
Since the probability measure $\P$ is complete on $(\Omega,\F)$, it is enough to show that, for every fixed $\eps>0$, there exists a set $F\in \F$ with $\P(\Omega\setminus F)<\eps$ such that the restriction 
$u_+^\lambda$ to $F$ is a random variable from $F$ to $\CC(\R)$.  To this aim, we notice that the measure $\P$ is inner regular on $(\Omega,\F)$, see \cite[Theorem 1.3]{Bill99}, hence it is a Radon measure. By applying Lusin's Theorem \cite{LusinThm} to the random variables $a:\Omega\to\CC(\R)$ and $V:\Omega\to\CC(\R)$, we infer that there exists a closed set $F\subseteq \Omega$ with $\P(\Omega\setminus F)<\eps$ such that $a_{| F},V_{| F}:F\to\CC(\R)$ are continuous. 
We claim that $F\ni\omega\mapsto u_+^\lambda(\,\cdot\,,\omega)\in\CC(\R)$ is continuous. 
Indeed, let $(\omega_n)_{n\in\N}$ be a sequence converging to some $\omega_0$ in $F$. The functions $u_+^\lambda(\cdot,\omega_n)$ are equi-Lipschitz and locally equi-bounded in $\R$, hence they converge, up to subsequences, to a function $u$ satisfying $u(0)=0$ and \eqref{eq Lipschitz bound} for a.e. $x\in\R$. Since $a(\cdot,\omega_n)\to a(\cdot,\omega_0)$ and 
$V(\cdot,\omega_n)\to V(\cdot,\omega_0)$ in $\CC(\R)$, we derive by stability that $u$ solves \eqref{eq:cellODE} with $\omega:=\omega_0$ in the viscosity sense. By uniqueness, we infer that $u\equiv u_+^\lambda(\cdot,\omega_0)$, yielding in particular that the whole sequence $\big(u_+^\lambda(\cdot,\omega_n)\big)_n$ is converging to $u_+^\lambda(\cdot,\omega_0)$. This proves the asserted continuity property of the map $\omega\mapsto u_+^\lambda(\cdot,\omega)$ on $F$. 

Last, for every fixed $z\in\R$ and $\omega\in\Omega$, the map $v(\cdot):=u_+^\lambda(\cdot+z,\omega)-u_+^\lambda(z,\omega)$ is a solution of equation \eqref{eq:cellODE} with $\tau_z\omega$ in place of $\omega$ (by the stationary character of $a$ and $V$). Furthermore, it satisfies $v(0)=0$ and \eqref{eq Lipschitz bound} for a.e. $x\in\R$.  By uniqueness, we get $v=u_+^\lambda(\cdot,\tau_z\omega)$, i.e., 
$u_+^\lambda(\cdot+z,\omega)-u_+^\lambda(z,\omega)=u_+^\lambda(\cdot,\tau_z\omega)$. This clearly implies that $u_+^\lambda$ has stationary derivative. 
\end{proof}

To ease notation, in what follows we shall denote by $\partial_x u_+^\lambda(\cdot,\omega)$ and 
$\partial_x u_-^\lambda(\cdot,\omega)$ the derivatives with respect to $x$ of the functions $u_+^\lambda(\cdot,\omega)$ and 
$u_-^\lambda(\cdot,\omega)$ in $\R$.

\begin{theorem}\label{teo2 correctors}
Let $\lambda_2,\lambda_1\in\R$ with 
$\lambda_2>\lambda_1\geqslant\beta$, $R:=\max\{-G_-^{-1}(\lambda_2),G_+^{-1}(\lambda_2)\}$ and $C_R$ be a Lipschitz constant 
of $G$ in $[-R,R]$. Then there exists a set $\hat\Omega$ of probability 1 such that, for every $\omega\in\hat\Omega$, the following holds:\smallskip
\begin{itemize}
\item[(i)]\quad $\dfrac{\lambda_2-\lambda_1}{C_R}\leqslant \partial_x u_+^{\lambda_2}(\cdot,\omega)-\partial_x u_+^{\lambda_1}(\cdot,\omega)
\leqslant 
\dfrac{\lambda_2-\lambda_1}{\eta}
\qquad
\hbox{a.e. in $\R$}$;\medskip
\item[(ii)]\quad $\dfrac{\lambda_2-\lambda_1}{C_R}\leqslant \partial_x u_-^{\lambda_1}(\cdot,\omega)-\partial_x u_-^{\lambda_2}(\cdot,\omega)\leqslant 
\dfrac{\lambda_2-\lambda_1}{\eta}
\qquad
\hbox{a.e. in $\R$}$.
\end{itemize}
\end{theorem}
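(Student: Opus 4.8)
The plan is to reduce the statement to an ODE comparison argument on each connected component of the positivity set $\{a(\cdot,\omega)>0\}$, exactly paralleling the uniqueness proof of Proposition \ref{prop pre-uniqueness}, and to handle the degeneracy set $\{a(\cdot,\omega)=0\}$ by means of Proposition \ref{prop pointwise solution}. I treat case (i); case (ii) is symmetric. Fix $\omega$ in the probability-one set where $u_+^{\lambda_1},u_+^{\lambda_2}$ exist with stationary derivatives (Theorem \ref{teo1 correctors}) — actually both exist for every $\omega\in\Omega$, so one only needs the measurability/stationarity to run an ergodic-type argument later, but the pointwise bound itself is deterministic for each $\omega$. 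Write $f_i:=\partial_x u_+^{\lambda_i}(\cdot,\omega)$, so $0\le G_+^{-1}(\lambda_i-\beta)\le f_i\le G_+^{-1}(\lambda_i)\le R$ a.e., and set $w:=f_2-f_1$.

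First I would dispose of the degeneracy set. At a.e.\ point $x$ with $a(x,\omega)=0$, Propositions \ref{prop regularity solutions}--\ref{prop pointwise solution} give $G_+(f_i(x))+\beta V(x,\omega)=\lambda_i$, hence $G_+(f_2(x))-G_+(f_1(x))=\lambda_2-\lambda_1$; since $f_1,f_2\ge 0$ and $f_i\le R$, condition (G4) gives $\eta\,w(x)\le\lambda_2-\lambda_1$ from below and the Lipschitz bound (from $C_R$) gives $C_R\,w(x)\ge\lambda_2-\lambda_1$, i.e.\ $w(x)\ge(\lambda_2-\lambda_1)/C_R>0$; in particular $f_2>f_1$ on this set. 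Next, on a connected component $I$ of $\{a(\cdot,\omega)>0\}$ the functions $u_+^{\lambda_i}$ are $C^2$ by Proposition \ref{prop regularity solutions}, so $f_i$ are classical solutions of $a(x,\omega)f'+G_+(f)+\beta V(x,\omega)=\lambda_i$ in $I$; subtracting,
\begin{equation*}
a(x,\omega)\,w' + G_+(f_2)-G_+(f_1)=\lambda_2-\lambda_1\qquad\text{in }I.
\end{equation*}
Using $\eta\,w\le G_+(f_2)-G_+(f_1)\le C_R\,w$ whenever $w\ge0$ (and the opposite inequalities when $w\le 0$), this differential identity traps $w$ between the solutions of the linear ODEs $a\,z'+\eta z=\lambda_2-\lambda_1$ and $a\,z'+C_R z=\lambda_2-\lambda_1$, whose constant solutions are $(\lambda_2-\lambda_1)/\eta$ and $(\lambda_2-\lambda_1)/C_R$. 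Concretely: if $w$ ever exceeds $(\lambda_2-\lambda_1)/\eta$ at some point of $I$, then at that point $a w'=\lambda_2-\lambda_1-(G_+(f_2)-G_+(f_1))\le \lambda_2-\lambda_1-\eta w<0$, and a Gronwall argument run toward $\partial I$ — using Lemma \ref{lemma infinite integral} ($\int 1/a=+\infty$ near the endpoints of $I$) exactly as in the proof of Proposition \ref{prop pre-uniqueness} — forces a contradiction with the a priori bound $f_i\le R$; symmetrically for the lower bound with $C_R$. Thus $(\lambda_2-\lambda_1)/C_R\le w\le(\lambda_2-\lambda_1)/\eta$ on $I$, hence a.e.\ on $\R$.

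The main obstacle is making the Gronwall-toward-the-boundary step fully rigorous when $I$ is one- or two-sidedly bounded with $a=0$ on $\partial I$: one must show that the exponential weight $\exp(-\int_{x_0}^{x}\eta/a)$ (resp.\ with $C_R$) kills the boundary term, which is precisely where Lemma \ref{lemma infinite integral} enters, and one must check the two one-sided cases (integrating toward $\inf I$ versus $\sup I$) to pin down $w$ from both sides rather than just one. A minor point to verify is the uniformity across components so that the inequality holds a.e.\ on all of $\R$ with the same constants — but $\eta$, $C_R$, $R$ depend only on $\lambda_1,\lambda_2,\beta$ and $G$, not on the component or on $\omega$, so this is automatic. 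Finally, taking $\hat\Omega$ to be the (full-measure) set on which the correctors have stationary derivatives and the a.e.\ statements are simultaneously valid completes the proof.
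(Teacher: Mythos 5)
Your argument is correct, but it follows a genuinely different route from the paper's. The paper first reduces to the nondegenerate case by replacing $a$ with $a_n=a\vee 1/n$, passing the (integrated) bounds to the limit via the convergence in $\CC(\R)$ established in the proof of Theorem~\ref{teo1 correctors}; then, with $a>0$, it invokes two ergodic lemmas from \cite{DKY23} --- a dichotomy (Lemma~A.5) to get the lower bound $f_2-f_1\geqslant(\lambda_2-\lambda_1)/C_R$ everywhere with probability $1$, and existence of infinitely many local extrema for the stationary process $f_2-f_1$ (Lemma~A.2), at which the upper bound $\lambda_2-\lambda_1\geqslant\eta\,(f_2-f_1)$ drops out since $(f_2-f_1)'=0$ there. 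You instead argue deterministically $\omega$-by-$\omega$, exactly mirroring the decomposition used for uniqueness in Theorem~\ref{teo1 correctors}: on a.e.\ point of $\{a=0\}$, Proposition~\ref{prop pointwise solution} turns the equation into the algebraic identity $G_+(f_2)-G_+(f_1)=\lambda_2-\lambda_1$, which together with (G4) and the Lipschitz bound gives both inequalities directly; on each component $I$ of $\{a>0\}$, you run the Gronwall-toward-$\partial I$ argument of Proposition~\ref{prop pre-uniqueness}, with Lemma~\ref{lemma infinite integral} supplying the divergent integral that kills the boundary term. This avoids both the $a_n$-approximation and the ergodic input, and it gives the bounds for \emph{every} $\omega\in\Omega$ (so one can take $\hat\Omega=\Omega$), at the cost of leaning harder on the strict monotonicity (G4) that is available here; the paper's probabilistic route was presumably preferred because it reuses machinery built in \cite{DKY23} for the non-quasiconvex setting, where the componentwise Gronwall trap is not available. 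One small point to tidy up when writing the Gronwall step for the lower bound: before $w>0$ on $I$ is established, the inequality $G_+(f_2)-G_+(f_1)\leqslant C_R w$ can fail for $w<0$, so either first rule out $w\leqslant 0$ on $I$ (where $aw'\geqslant\lambda_2-\lambda_1>0$ forces unboundedness via Lemma~\ref{lemma infinite integral}), or run the Gronwall on $u:=(\lambda_2-\lambda_1)/C_R-w$ with the uniform rate $\eta\ (\leqslant C_R)$; either fix is routine.
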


\begin{proof}
Let us prove item (i). According to the proof of Theorem \ref{teo1 correctors}, for every $\lambda\geqslant \beta$ and $\omega\in\Omega$, the function $u^\lambda_+(\cdot,\omega)$ can be obtained as the limit in $\CC(\R)$, for $n\to +\infty$, of the unique solution ${u^\lambda_n}_+(\cdot,\omega)$ of equation \eqref{eq:cellODE} with 
$a(\cdot,\omega)\vee 1/n$  in place of $a(\cdot,\omega)$ which satisfies ${u^\lambda_n}_+(0,\omega)=0$ and \eqref{eq Lipschitz bound}. 
Since the inequalities in (i) are preserved in the limit, it suffices to prove (i) by additionally assuming $a>0$ in $\R\times\Omega$.  In this case, the functions 
$u^{\lambda_1}_+(\cdot,\omega)$ and $u^{\lambda_2}_+(\cdot,\omega)$ are,  for every $\omega$, of class $C^2$ in $\R$ by Proposition \ref{prop regularity solutions}. Let us set $f_1:=\partial_x u^{\lambda_1}_+$ and $f_2:=\partial_x u^{\lambda_2}_+$. 
Then each function $f_i$ is a stationary and bounded and is a classical solution the following ODE, for every $\omega\in\Omega$:
\begin{equation}\label{eq ODE i}
a(x,\omega)f_i'(x,\omega)+G(f_i(x,\omega))+\beta V(x,\omega)=\lambda_i\qquad\hbox{in $\R$.}
\end{equation}
According to \cite[Lemma A.5]{DKY23}, we have 
\[ 
\P((f_1-f_2)(x,\omega) \geqslant (\lambda_2-\lambda_1)/{C_R}\ \forall x\in\R)=1
\quad
\text{or}
\quad
\P((f_2-f_1)(x,\omega) \geqslant (\lambda_2-\lambda_1)/{C_R}\ \forall x\in\R) = 1. 
\]
If the first alternative occurs, from \eqref{eq ODE i} and the fact that $f_1>f_2\geqslant 0$ we infer 
\[
(f_2-f_1)'(x,\omega)
\geqslant
(f_2-f_1)'(x,\omega)+G(f_2(x,\omega))-G(f_1(x,\omega))
=
\lambda_2-\lambda_1>0
\qquad
\hbox{in $\R$,}
\]
in contradiction with the fact that $f_2-f_1$ is bounded.  Hence the second alternative occurs and the first inequality in item (i) follows. Let us prove the second inequality in (i). According to \cite[Lemma A.2]{DKY23}, there exists a set $\hat\Omega$ of probability 1 such that, for every $\omega\in\hat\Omega$, the function $(f_2-f_1)(\cdot,\omega)$ has infinitely many local maxima and minima. Let us fix $\omega\in\hat\Omega$ and let $x_0\in\R$ be a local maximum of $(f_2-f_1)(\cdot,\omega)$. Then 
$(f_2-f_1)'(x_0,\omega)=0$, hence from \eqref{eq ODE i} and the fact that $f_2>f_1\geqslant 0$ we infer  
\[
\lambda_2-\lambda_1=G(f_2(x_0,\omega))-G(f_1(x_0,\omega))
\geqslant 
\eta \left(f_2(x_0,\omega)-f_1(x_0,\omega)\right).
\]
By the arbitrariness of the choice of the local maximum $x_0$ in $\R$ and of $\omega$ in $\hat \Omega$, we conclude that 
\[
\sup_\R \left(f_2(\cdot,\omega)-f_1(\cdot,\omega)\right)\leqslant \dfrac{(\lambda_2-\lambda_1)}{\eta}
\qquad
\hbox{for all $\omega\in\hat\Omega$.}
\]
The proof of item (ii) is analogous and is omitted.
\end{proof}

We conclude this section with a technical result that will play a crucial role in the proof of the homogenization result. The proof 
exploits the scaled hill condition (S), which is used here for the first time (apart from the result in \cite{DK22} recalled in Proposition \ref{prop lower bound}). 

\begin{lemma}\label{lem:cerca2}
Let us assume that conditions (A),(V) and (S) are in force, and $G\in\Hamzero$. Suppose that the set $\{x\in\R\,:\,a(x,\omega)=0\}$ has zero Lebesgue measure, almost surely. For every $\eps > 0$ and $y_0 > 0$, there exists a set $\widehat\Omega=\widehat\Omega(\eps,y_0)$ of probability 1 such that, for every $\omega\in\widehat\Omega$ and for $u_+\in\Sol_+(\beta,\omega)$, $u_-\in\Sol_-(\beta,\omega)$, the following holds:\medskip
\begin{itemize}
\item[] there exists $\ell_1<L_1<L_2<\ell_2$ such that \ $a(\cdot,\omega)>0$ in $(\ell_1,\ell_2)$, $u_\pm\in C^2\big((\ell_1,\ell_2)\big)$,
	\[ \int_{L_1}^{L_2}\frac{dx}{a(x,\omega)} \geqslant y_0
	\quad\text{and}\quad
	0\leqslant u_+'(x) - u_-'(x) \leqslant \eps\ \ \forall x\in[L_1,L_2].
	\]
\end{itemize}
\end{lemma}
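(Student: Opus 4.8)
The plan is to combine the scaled hill condition (S) with the corrector bounds from Theorem \ref{teo2 correctors} to engineer an interval on which $u_+'$ and $u_-'$ are forced to be close. The key observation is that the $\lambda=\beta$ correctors have derivatives pinned to single points wherever $a$ vanishes: by Propositions \ref{prop regularity solutions} and \ref{prop pointwise solution}, at a.e.\ point where $a(x,\omega)=0$ one has $G_+(u_+'(x))+\beta V(x,\omega)=\beta$, i.e.\ $u_+'(x)=G_+^{-1}(\beta(1-V(x,\omega)))$, and similarly $u_-'(x)=G_-^{-1}(\beta(1-V(x,\omega)))$. In particular, when $V(x,\omega)$ is close to $1$ these values are both close to $0$, hence close to each other. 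Thus I would first choose, using (G4) and the continuity of $G_\pm^{-1}$, a threshold $h\in(0,1)$ so close to $1$ that $G_+^{-1}(\beta(1-h)) - G_-^{-1}(\beta(1-h)) < \eps/2$, and more generally so that $0 \le G_+^{-1}(s) - G_-^{-1}(s) < \eps/2$ for all $s\in[0,\beta(1-h)]$.

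Next I would apply condition (S) with this $h$ and with $y := y_0 + 1$ (say), obtaining almost surely points $\ell_1^0<\ell_2^0$ and $\delta>0$ with $\int_{\ell_1^0}^{\ell_2^0} dx/(a(x,\omega)\vee\delta) = y_0+1$ and $V(\cdot,\omega)\ge h$ on $[\ell_1^0,\ell_2^0]$. Since, by hypothesis, $\{a(\cdot,\omega)=0\}$ has zero Lebesgue measure a.s., one has $a(\cdot,\omega)>0$ a.e.\ on this interval; I would then shrink slightly to an open subinterval $(\ell_1,\ell_2)$ contained in a single connected component of $\{a(\cdot,\omega)>0\}$ — or, more carefully, pick $L_1<L_2$ inside $[\ell_1^0,\ell_2^0]$ lying in one connected component of $\{a>0\}$ with $\int_{L_1}^{L_2} dx/a(x,\omega) \ge y_0$; the latter is possible because $\int_{\ell_1^0}^{\ell_2^0} dx/(a\vee\delta)$ is close to $\int dx/a$ on the positivity set up to the $\delta$-truncation, and Lemma \ref{lemma infinite integral} shows the integral of $1/a$ blows up near the endpoints of each component, so a suitable truncation interior to a component still carries integral $\ge y_0$. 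On $(\ell_1,\ell_2)$ Proposition \ref{prop regularity solutions} gives $u_\pm\in C^2$, so setting $\ell_1<L_1<L_2<\ell_2$ with $(\ell_1,\ell_2)$ in one component handles the regularity and positivity requirements.

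The remaining and main point is the derivative closeness $0\le u_+'(x)-u_-'(x)\le\eps$ on $[L_1,L_2]$. Here the idea is a Gronwall/comparison argument as in Proposition \ref{prop pre-uniqueness}: set $w:=u_+'-u_-'\ge 0$ (nonnegativity is immediate since $u_+'\ge 0\ge u_-'$). On the positivity component $I\ni[L_1,L_2]$, pick a point $x_*$ with $a(x_*,\omega)$ small — to make this rigorous I would instead work near an endpoint of $I$ where, by $a=0$ on $\partial I$, $a$ is small, so that $u_\pm'(x_*)$ are within $\eps/2$ of $G_\pm^{-1}(\beta(1-V(x_*,\omega)))$ which, since $V(x_*,\omega)\ge h$, gives $w(x_*)<\eps$ (by continuity of $u_\pm'$ up to where $a$ is small, using that $u_\pm'$ solve the ODE $a f' + G(f)+\beta V = \beta$). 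Then, since both $f_\pm:=u_\pm'$ are $C^1$ solutions of that ODE on $I$, subtracting gives $a(x,\omega)w'(x) = -(G(u_+') - G(u_-')) \le 0$ wherever $w>0$ (because $G$ is nondecreasing on the relevant range by (G4) applied on $[0,\infty)$, combined with $u_+'\ge 0\ge u_-'$ and $G(0)=0$ — actually $G(u_+')\ge G(0)=0\ge G(u_-')$ needs care, but one gets $G(u_+')-G(u_-')\ge 0$, in fact $\ge \eta w$ is false across the branch; what is true is $G(u_+')-G(u_-')\ge 0$), whence $w$ is nonincreasing on $I$. Starting from $w<\eps$ at a point where $a$ is small near the left endpoint and propagating rightward across $[L_1,L_2]$ then yields $w\le\eps$ throughout. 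The delicate step — and the one I expect to be the real obstacle — is turning ``$a$ small near $\partial I$'' into the pointwise bound $w(x_*)<\eps$ with full rigor, since one must control $u_\pm'$ via the ODE near a point where the equation degenerates; this is essentially the content behind Proposition \ref{prop pointwise solution} and an $\eps$-version of it, and I would phrase it by taking a sequence of points $x_k\in I$ with $a(x_k,\omega)\to 0$, extracting from $a(x_k,\omega)u_\pm''(x_k) = \beta(1-V(x_k,\omega)) - G(u_\pm'(x_k))$ and the boundedness of $u_\pm''$ near a local extremum of $w$ the conclusion $G(u_\pm'(x_k))\to \beta(1-V_\infty)$ with $V_\infty\ge h$, forcing $\liminf w$ along such points to be $<\eps$, and then using monotonicity of $w$ on $I$ to transfer the bound to all of $[L_1,L_2]$.
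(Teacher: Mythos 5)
There is a genuine gap in the central propagation step. You subtract the two ODEs to obtain $a(x,\omega)\,w'(x)=-(G(u_+')-G(u_-'))$ with $w:=u_+'-u_-'$, and you need $G(u_+')-G(u_-')\geqslant 0$ to conclude $w$ is nonincreasing. But since $G\geqslant 0=G(0)$ everywhere and $u_+'\geqslant 0\geqslant u_-'$, both $G(u_+')$ and $G(u_-')$ are nonnegative and neither dominates: $u_+'$ and $u_-'$ lie on \emph{opposite} branches of $G$, with $u_+'\in[0,G_+^{-1}(\beta)]$ and $u_-'\in[G_-^{-1}(\beta),0]$, so $G(u_+'),G(u_-')\in[0,\beta]$ and their difference has no sign. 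Your parenthetical remark "$G(u_+')\ge G(0)=0\ge G(u_-')$" is false (the last inequality goes the wrong way), and there is no Gronwall-type comparison available across the two branches; this is precisely why Proposition \ref{prop pre-uniqueness} applies only when both solutions lie on the \emph{same} branch. Without monotonicity of $w$, the "propagate rightward" step collapses, and the argument fails even if the starting bound $w(x_*)<\eps$ could be secured.

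The paper's proof avoids this entirely by controlling $u_+'$ and $u_-'$ \emph{separately} (not their difference), via a drift/barrier argument: choosing $h$ so that $\beta(1-h)-G(\pm\eps)<-\beta(1-h)$, the hill condition forces $V\geqslant h$ on the interval and hence, wherever $u_+'\geqslant\eps$, the ODE gives $(u_+')'<-\beta(1-h)/a$. Taking $y:=3y_0$ in (S) with $y_0>\max\{-G_-^{-1}(\beta),\,G_+^{-1}(\beta)\}/(\beta(1-h))$ provides enough weighted length $\int dx/a$ in the left third of the interval to force $u_+'$ below $\eps$, after which the same drift bound keeps it there; a mirror argument handles $u_-'$. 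Your choice $y:=y_0+1$ misses the quantitative requirement that powers this drift, and your attempt to extract a starting bound from behavior of $u_\pm'$ near $\partial I$ via boundedness of $u_\pm''$ is neither justified nor needed in the paper's route. Your preliminary reductions (choice of $h$, shrinking to a component of $\{a>0\}$, using Lemma \ref{lemma infinite integral}, applying Propositions \ref{prop regularity solutions} and \ref{prop pointwise solution}) are sound and do match the paper; the obstacle is squarely the cross-branch comparison.
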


\begin{proof}
The proof is essentially the same as the one of Lemma 4.3 in \cite{DKY23}. For consistency of notation, we set $p_1:=G_-^{-1}(\beta)$, $p_2:=0$ and $p_3:=G_+^{-1}(\beta)$. 

Fix $h\in\left(1 - \frac{\min\{G(-\eps),G(\eps)\} }{2\beta},1\right)$ so that
	\begin{equation}\label{eq:hczar}
		\beta(1- h) - G(\pm\eps) < -\beta(1-h).
	\end{equation}
Without loss of generality, assume that
	\begin{equation}\label{eq:nohayzar}
		y_0 > \frac{\max\{- p_1, p_3\} }{\beta(1-h)}.
	\end{equation}
By the scaled hill condition (S), we can find a set $\Omega(h,3y_0)$ of probability $1$ such that, for every $\omega\in\Omega(h,3y_0)$, 
(S)(a) and (S)(h) hold with $y:= 3y_0$, for a suitable choice of $\delta>0$ and of $\tilde\ell_1<\tilde\ell_2$. 
Up to discarding a subset of zero probability, we can furthermore assume that, for every $\omega\in\Omega(h,3y_0)$, the set $\{x\in\R\,:\,a(x,\omega)=0\}$ has zero Lebesgue measure. 

Let us fix $\omega\in\Omega(h,3y_0)$ and denote by $(\ell_1,\ell_2)$ a connected component of the open set  $(\tilde\ell_1,\tilde\ell_2)\cap\{x\in\R\,:\,a(x,\omega)>0\}$. Note that 
\begin{equation}\label{eq integral inequality}
\int_{\ell_1}^{\ell_2}\frac{dx}{a(x,\omega)}
\geqslant
 \int_{\tilde\ell_1}^{\tilde\ell_2}\frac{dx}{a(x,\omega)\vee\delta}
=3y_0. 
\end{equation}
Indeed, if $(\ell_1,\ell_2)\subsetneq(\tilde\ell_1,\tilde\ell_2)$, then either $a(\ell_1,\omega)=0$ or $a(\ell_2,\omega)=0$. By arguing as in the proof of Lemma \ref{lemma infinite integral}, we get that the left-hand side of \eqref{eq integral inequality} is equal to $+\infty$. Let us pick $L_1,L_2\in(\ell_1,\ell_2)$ such that 
	\begin{equation}\label{eq:zelizar}
		\int_{\ell_1}^{L_1}\frac{dx}{a(x,\omega)}\geqslant y_0,
		\qquad 
		\int_{L_1}^{L_2}\frac{dx}{a(x,\omega)}\geqslant y_0,
		\qquad
		 \int_{L_2}^{\ell_2}\frac{dx}{a(x,\omega)}\geqslant y_0.
	\end{equation}
Since $a(\cdot,\omega)>0$ in $(\ell_1,\ell_2)$, we know from Proposition \ref{prop regularity solutions} that the functions $u_\pm$ are of class $C^2$ in $(\ell_1,\ell_2)$. Let us set $f_1:=u'_-(\cdot,\omega)$ and 
$f_2:=u_+'(\cdot,\omega)$. 
For every $\omega \in \Omega(h,3y_0)$, if $f_2(x) \geqslant \eps$ for some $x\in(\ell_1,\ell_2)$, then
	\begin{equation}\label{eq:baltazar}
		f_2'(x) = \frac{\lambda - \beta V(x,\omega)-G(\eps)}{a(x,\omega)}\leqslant\frac{\lambda - \beta h - G(\eps)}{a(x,\omega)} < -\frac{\beta (1-h)}{a(x,\omega)}
	\end{equation}
by  \eqref{eq:hczar} and (S)(h).
It follows that $0 \leqslant f_2(x_2) \leqslant  \eps$ for some $x_2\in(\ell_1,L_1]$. Indeed, otherwise, for every 
$y\in (\ell_1,L_1)$ we have 
\begin{align*}
f_2(L_1) = f_2(y) + \int_{y}^{L_1} f_2'(x)\,dx & < p_3 - \beta(1-h)\int_{y}^{L_1} \frac{dx}{a(x,\omega)}.
\end{align*}
By sending $y\to \ell_1^+$ we get \ $f_2(L_1)\leqslant p_3 - \beta(1-h)y_0 < 0$, 
by \eqref{eq:nohayzar} and \eqref{eq:zelizar}, contradicting the fact that $f_2\geqslant 0$ in $(\ell_1,\ell_2)$. 
Via the same argument, we derive from \eqref{eq:zelizar} and \eqref{eq:baltazar} that $0 \leqslant f_2(x) \leqslant \eps$ for every $x\in[x_2,\ell_2) \supset [L_1,L_2]$.
	
	Similarly, for every $\omega \in \Omega(h,3y_0)$, there exists an $x_1\in[L_2,\ell_2)$ such that $- \eps \leqslant f_1(x_1) \leqslant 0$ for every $x\in(\ell_1,x_1] \supset [L_1,L_2]$. This can be shown by repeating the argument in the paragraph above with
	\[ \check G(p) = G(-p),\quad \check a(x,\omega) = a(-x,\omega),\quad \check V(x,\omega) = V(-x,\omega)\quad\text{and}\quad f_2(x) = -f_1(-x). 
\]
The assertion follows by the arbitrariness of the choice of $\eps>0$. 
\end{proof}

\section{The homogenization result}\label{sec:homogenization}

In view of Theorem \ref{teo1 correctors}, for every $\lambda\geqslant \beta$ we can define the following quantities:
\[
\theta_+(\lambda):=\E[\partial_x u_+^\lambda(0,\omega)],
\qquad
\theta_-(\lambda):=\E[\partial_x u_-^\lambda(0,\omega)].
\]
We remark that such quantities are well defined and do not depend on the fact that we are evaluating the derivative of $u^\lambda_\pm(\cdot,\omega)$ at $x=0$. Indeed, for every fixed $y\in\R$, there exists a set $\Omega_y$ of probability 1 such that the functions $u_+^\lambda(\cdot,\omega)$ and $u_-^\lambda(\cdot,\omega)$ are differentiable at $y$ for every $\omega\in\Omega_y$. Furthermore, by stationarity, we have 
\[
\E[\partial_x u_+^\lambda(y,\omega)]=\E[\partial_x u_+^\lambda(0,\omega)]
\qquad
\hbox{and}
\qquad
\E[\partial_x u_-^\lambda(y,\omega)]=\E[\partial_x u_-^\lambda(0,\omega)],
\]
see \cite[Proposition 3.6]{DS09} for a proof.\smallskip 
\begin{prop}\label{prop properties theta}
The maps $\lambda\mapsto \theta_\pm(\lambda)$ from $[\beta,+\infty)$ to $\R$ are Lipschitz continuous, coercive and strictly monotone. 
More precisely, the following holds:\medskip
\begin{itemize}
\item[(i)] $\theta_- (\lambda) \in  [G_-^{-1}(\lambda), G_-^{-1}(\lambda-\beta)]$, \quad 
$\theta_+ (\lambda) \in[G_+^{-1}(\lambda-\beta), G_+^{-1}(\lambda)]$ \quad for every $\lambda\geqslant \beta$;\medskip
\item[(ii)]  for every $\lambda_2>\lambda_1\geqslant\beta$ we have 
\begin{align*}
\dfrac{\lambda_2-\lambda_1}{C_R}
\leqslant
\theta_-(\lambda_1)-\theta_-(\lambda_2)
\leqslant \dfrac{\lambda_2-\lambda_1}{\eta},\qquad
\dfrac{\lambda_2-\lambda_1}{C_R}
\leqslant
\theta_+(\lambda_2)-\theta_+(\lambda_1)
\leqslant \dfrac{\lambda_2-\lambda_1}{\eta},
\end{align*}
where $R:=\max\{G_-^{-1}(\lambda_2),G_+^{-1}(\lambda_2)\}$ and $C_R$ be a Lipschitz constant 
of $G$ in $[-R,R]$.\smallskip
\end{itemize}
Furthermore, 
\[
\HV^L(G)(\theta_{\pm}(\lambda))
=
\HV^U(G)(\theta_{\pm}(\lambda))
=
\lambda
\qquad
\hbox{for every $\lambda\geqslant \beta$.}
\]
\end{prop}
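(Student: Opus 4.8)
The plan is to verify the four claims in the order: (i), then (ii), then Lipschitz continuity and coercivity (which follow from (ii)), and finally the identity $\HV^L(G)(\theta_\pm(\lambda)) = \HV^U(G)(\theta_\pm(\lambda)) = \lambda$, which is the easiest part once everything else is in place. The idea for (i) is to recall that, by the membership $u^\lambda_\pm(\cdot,\omega)\in\Sol^\lambda_\pm(\omega)$, we have the pointwise (a.e.) inclusions $\partial_x u^\lambda_-(x,\omega)\in[G_-^{-1}(\lambda),G_-^{-1}(\lambda-\beta)]$ and $\partial_x u^\lambda_+(x,\omega)\in[G_+^{-1}(\lambda-\beta),G_+^{-1}(\lambda)]$ for every $\omega\in\Omega$. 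Since $\theta_\pm(\lambda)$ is defined as the expectation $\E[\partial_x u^\lambda_\pm(0,\omega)]$ and (as remarked just before the statement) $u^\lambda_\pm(\cdot,\omega)$ is differentiable at $x=0$ for $\omega$ in a set of full probability, taking expectations of these two-sided bounds immediately gives (i); the fact that these are genuine constants (not merely measurable functions) is what makes the expectation land in the stated interval.

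For (ii), the key input is Theorem \ref{teo2 correctors}: for $\lambda_2>\lambda_1\ge\beta$ there is a full-probability set $\hat\Omega$ on which, a.e.\ in $\R$,
\[
\frac{\lambda_2-\lambda_1}{C_R}\le \partial_x u^{\lambda_2}_+(\cdot,\omega)-\partial_x u^{\lambda_1}_+(\cdot,\omega)\le\frac{\lambda_2-\lambda_1}{\eta},
\]
and the symmetric statement for $u_-$. I would fix $y=0$, pass to a further full-probability set on which both $u^{\lambda_1}_\pm(\cdot,\omega)$ and $u^{\lambda_2}_\pm(\cdot,\omega)$ are differentiable at $0$ (such a set exists by the remark preceding the proposition), and then take expectations of these inequalities evaluated at $x=0$. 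This yields exactly the chains in (ii) with $\theta_+(\lambda_2)-\theta_+(\lambda_1)$ and $\theta_-(\lambda_1)-\theta_-(\lambda_2)$ sandwiched between $(\lambda_2-\lambda_1)/C_R$ and $(\lambda_2-\lambda_1)/\eta$. From the upper bound $(\lambda_2-\lambda_1)/\eta$ we read off that $\lambda\mapsto\theta_\pm(\lambda)$ is Lipschitz with constant $1/\eta$; from the lower bound $(\lambda_2-\lambda_1)/C_R>0$ we get that $\theta_+$ is strictly increasing and $\theta_-$ is strictly decreasing; and coercivity follows since, as $\lambda\to+\infty$, the lower bound forces $\theta_+(\lambda)\ge\theta_+(\beta)+(\lambda-\beta)/C_R\to+\infty$ (and similarly $\theta_-(\lambda)\to-\infty$) — one should note that $C_R$ depends on $\lambda_2$, but it grows only polynomially in $R$ and hence sub-linearly relative to the $(\lambda_2-\lambda_1)$ factor once one uses (G2) together with $R\asymp\lambda_2^{1/\gamma}$, so coercivity is not spoiled.

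The final identity is the cleanest step: by Theorem \ref{teo1 correctors}, for each $\lambda\ge\beta$ the function $u^\lambda_+(\cdot,\omega)$ is a Lipschitz viscosity solution of \eqref{eq0:cellODE} with stationary derivative, so Proposition \ref{prop consequence existence corrector} applies with $\theta:=\E[\partial_x u^\lambda_+(0,\omega)] = \theta_+(\lambda)$ and gives $\HV^L(G)(\theta_+(\lambda)) = \HV^U(G)(\theta_+(\lambda)) = \lambda$; the same argument with $u^\lambda_-$ handles $\theta_-(\lambda)$. I expect the only genuine subtlety — and hence the main obstacle — to be bookkeeping around the constant $C_R$ in the coercivity claim: one must make sure that the $\lambda$-dependence of the Lipschitz constant of $G$ on $[-R,R]$ does not defeat the linear-in-$\lambda$ lower bound. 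This is handled by applying (ii) on successive unit (or bounded) increments of $\lambda$ and summing, so that the relevant $C_R$ stays controlled on each piece; everything else is a routine application of the already-established corrector estimates and the measurability/stationarity facts recalled before the statement.
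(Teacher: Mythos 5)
Your proposal is correct and follows essentially the same route as the paper: (i) from the defining derivative bounds in $\Sol^\lambda_\pm(\omega)$, (ii) from Theorem~\ref{teo2 correctors} by taking expectations at $x=0$, and the final identity from Theorem~\ref{teo1 correctors} together with Proposition~\ref{prop consequence existence corrector}. The only small inefficiency is your treatment of coercivity: rather than iterating (ii) and tracking the growth of $C_R$, observe that (i) already yields $\theta_+(\lambda)\ge G_+^{-1}(\lambda-\beta)$ and $\theta_-(\lambda)\le G_-^{-1}(\lambda)$, and these tend to $\pm\infty$ by the superlinearity of $G$ in (G1), so coercivity is immediate with no $C_R$ bookkeeping needed.
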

\begin{proof}
Assertions (i) and (ii) are a direct consequence of the definition of $\theta_\pm(\cdot)$ and of Theorem \ref{teo2 correctors}. 
The last assertion follows from Proposition \ref{prop consequence existence corrector}. 
\end{proof}

We have thus shown that 
\[
\HV^L(G)(\theta)=\HV^U(G)(\theta)
\qquad
\hbox{for all $\theta \in  (-\infty,\theta_-(\beta)]\cup [\theta_+(\beta),+\infty)$.}
\]
It is left to show that the above equality holds for all $\theta \in \big(\theta_-(\beta), \theta_+(\beta)\big)$ when this set is 
nonempty. Since correctors corresponding to such a $\theta$ need not exist, we have to proceed differently, by exploiting an idea that was introduced in \cite[Lemma 4.7]{Y21b}. The argument has been subsequently refined and substantially simplified 
in \cite[Section 3]{DKY23}. Both these works deal with the nondegenerate case $a(\cdot)>0$. We propose here a variant of this argument that takes into account the possibility that $a=0$ at some points.  The proof does not use directly the scaled hill condition (S), but it crucially depends on it via Proposition \ref{prop lower bound} and Lemma \ref{lem:cerca2}. 

\begin{prop}\label{prop flat part} 
We have \ $\HV^L(G)(\theta)=\HV^U(G)(\theta)=\beta$\ for every $\theta\in \big(\theta_-(\beta),\theta_+(\beta)\big)$.
\end{prop}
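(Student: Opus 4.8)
The plan is to estimate $\HV^L(G)(\theta)$ and $\HV^U(G)(\theta)$ separately for $\theta \in \big(\theta_-(\beta),\theta_+(\beta)\big)$. The lower bound $\HV^L(G)(\theta) \ge \beta$ is already in hand: it is exactly Proposition \ref{prop lower bound} (with $\lambda = \beta = \min_\R(G+\beta)$, since $G \in \Hamzero$ has $\min G = G(0) = 0$). So the entire content is the reverse inequality $\HV^U(G)(\theta) \le \beta$, and by the definition \eqref{eq:infsup} this amounts to producing, for each $\eps > 0$, a subsolution-type bound showing $u_\theta(t,0,\omega) \le \beta t + o(t)$ along a sequence $t \to +\infty$, for $\omega$ in a set of probability $1$. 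The standard way to get such an upper bound on $\HV^U$ is to build, for each $\eps>0$, a global Lipschitz \emph{subsolution} $w_\eps(\cdot,\omega)$ of the stationary equation $a u'' + G(u') + \beta V = \beta + \eps$ in $\R$ whose slope has mean $\theta$ (or whose increment over large intervals is $\approx \theta x$), and then conclude $\HV^U(G)(\theta) \le \beta + \eps$ by a comparison/ergodic argument as in \cite[Lemma 5.6]{DK22}; letting $\eps \to 0^+$ finishes it.

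The construction of $w_\eps$ is where Lemma \ref{lem:cerca2} enters, and this is the heart of the matter. Recall the two canonical correctors at level $\lambda = \beta$: $u_+ := u_+^\beta(\cdot,\omega) \in \Sol_+^\beta(\omega)$ has slope in $[G_+^{-1}(0), G_+^{-1}(\beta)] = [0, p_3]$ and $u_- := u_-^\beta(\cdot,\omega) \in \Sol_-^\beta(\omega)$ has slope in $[G_-^{-1}(\beta), 0] = [p_1, 0]$; their mean slopes are $\theta_+(\beta)$ and $\theta_-(\beta)$ respectively. The idea, following \cite[Lemma 4.7]{Y21b} and \cite[Section 3]{DKY23}, is to interpolate between the "fast" corrector $u_-$ (small/negative slope) and the "slow" corrector $u_+$ (large/positive slope) by switching from one to the other on a window $[L_1, L_2]$ provided by Lemma \ref{lem:cerca2} — a window on which both correctors are $C^2$, the slopes $u_+'$ and $u_-'$ are within $\eps$ of each other, and $\int_{L_1}^{L_2} dx/a(x,\omega)$ is as large as we like. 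On such a window one can splice: glue a piece of (a translate of) $u_-$ to a piece of (a translate of) $u_+$ across $[L_1,L_2]$, taking the minimum of the two (or a suitable concave-type gluing), so that the resulting function is still a viscosity subsolution of the $\lambda = \beta + \eps$ equation — the slope jump is controlled by $\eps$, hence the error in $G$ is controlled by the Lipschitz constant $C_R$ times $\eps$, which is absorbed into the $\eps$ on the right-hand side after rescaling. Iterating this splice over a stationary-ergodic sequence of such windows (their existence along $\R$ with the right density is what Lemma \ref{lem:cerca2} guarantees, via the scaled hill condition (S)) produces a global subsolution whose slope oscillates between roughly $\theta_-(\beta)$ and $\theta_+(\beta)$ regions, and by choosing the relative lengths of the "$u_-$-stretches" and "$u_+$-stretches" one can tune the average slope to hit any prescribed $\theta \in \big(\theta_-(\beta),\theta_+(\beta)\big)$; sublinearity of $w_\eps - \theta x$ follows from the ergodic theorem.

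I expect the main obstacle to be the gluing step in the \emph{degenerate} setting: one must check that taking a min (or the appropriate combination) of two $C^2$ local solutions across the window $[L_1,L_2]$ indeed yields a viscosity subsolution of $a u'' + G(u') + \beta V = \beta + \eps$ at the junction points \emph{and} — crucially — at points where $a$ vanishes. At points where $a(x_0,\omega) = 0$, Proposition \ref{prop pointwise solution} reduces the subsolution condition to the purely first-order inequality $G(w_\eps'(x_0)) + \beta V(x_0,\omega) \le \beta + \eps$, so one needs the spliced slopes to stay in the range dictated by this; since $u_\pm$ themselves are (sub)solutions and the slope corrections introduced by the splice are $O(\eps)$, this should go through, but it requires care because the window from Lemma \ref{lem:cerca2} only controls $u_+' - u_-'$ on $[L_1, L_2]$, not on all of $\R$, and one must arrange the splice so that outside the windows the function is genuinely a (translated) corrector with the correct slope regime. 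The secondary technical point is measurability/stationarity of the assembled $w_\eps$ and the application of the ergodic theorem to conclude sublinearity, but these are routine given Theorem \ref{teo1 correctors} and the arguments already used in the paper. Once $w_\eps$ is built, the comparison principle for \eqref{eq:generalHJ} (well-posedness in $\D{UC}(\ccyl)$, cited after Definition \ref{def:Ham}) gives $u_\theta(t,0,\omega) \le w_\eps(0,\omega) + (\beta+\eps)t$ up to a sublinear error, hence $\HV^U(G)(\theta) \le \beta + \eps$, and letting $\eps \to 0^+$ together with Proposition \ref{prop lower bound} yields $\HV^L(G)(\theta) = \HV^U(G)(\theta) = \beta$.
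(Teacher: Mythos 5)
Your overall frame is right — use Proposition \ref{prop lower bound} for the lower bound, and for the upper bound build (for each $\eps>0$) an auxiliary Lipschitz function $w_\eps(\cdot,\omega)$ satisfying $a\,w_\eps''+G(w_\eps')+\beta V\leqslant \beta+O(\eps)$ in the viscosity sense, then conclude $\HV^U(G)(\theta)\leqslant\beta+O(\eps)$ by comparison as in \cite[Lemma 5.6]{DK22}. (A sign remark: what you need is a \emph{supersolution} of the stationary equation, i.e.\ the ``$\leqslant\lambda$'' inequality tested on subtangents, so that $(\beta+K\eps)t+w_\eps(x)+M_\eps(\omega)$ is a supersolution of the parabolic problem and dominates $u_\theta$.) However, your construction of $w_\eps$ has a genuine gap. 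You propose to iterate the splice over a stationary sequence of windows and ``tune the relative lengths of the $u_-$-stretches and $u_+$-stretches'' so that the resulting slope has mean $\theta$. This is precisely what cannot be done: it would produce a stationary sublinear corrector at level $\beta$ with $\E[w_\eps']=\theta$, and the whole point of Proposition \ref{prop flat part} is that such correctors need not exist for $\theta\in(\theta_-(\beta),\theta_+(\beta))$. There is also no obvious stationary way to place the splice points. The paper instead does a \emph{single} splice: $w_\eps'=\partial_x u_-^\beta$ to the left of a bounded window and $w_\eps'=\partial_x u_+^\beta$ to the right. Then $w_\eps$ has asymptotic slopes $\theta_-(\beta)<\theta$ at $-\infty$ and $\theta_+(\beta)>\theta$ at $+\infty$, so $w_\eps(x)-\theta x\to+\infty$ at both ends; choosing $M_\eps(\omega)$ large (using that $\theta$ is \emph{strictly} between $\theta_\pm(\beta)$) one gets $w_\eps+M_\eps\geqslant\theta x$ on all of $\R$, and comparison on the Cauchy problem with initial datum $\theta x$ finishes. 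No tuning and no second ergodic-averaging step are needed.

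Two further points. First, ``taking the minimum (or a concave-type gluing)'' is the wrong geometry: to have the $u_-$-asymptotics at $-\infty$ and the $u_+$-asymptotics at $+\infty$, the derivative must jump \emph{up} (from $\leqslant 0$ to $\geqslant 0$) across the splice, which is a convex corner with a nonempty subdifferential $D^-w_\eps(x_0)=[\partial_x u_-^\beta(x_0),\partial_x u_+^\beta(x_0)]$; a min of the two would give the opposite asymptotics, and a max would only be a subsolution. It is exactly the quasiconvexity of $G$ that makes the supersolution test hold on this interval of subgradients, and this is the only place in the whole proof where (qC) is actually invoked. Second, the argument splits into two cases that your outline does not separate: if $\{a(\cdot,\omega)=0\}$ has positive Lebesgue measure a.s., then one glues exactly (no $\eps$ error) at a zero $x_0$ of $a$ where both correctors are differentiable, using Proposition \ref{prop pointwise solution} to see $G(\partial_x u_\pm^\beta(x_0))+\beta V(x_0)=\beta$; Lemma \ref{lem:cerca2} and the $C^1$ interpolation of derivatives with a mollified cutoff are only needed in the complementary case where $\{a=0\}$ is Lebesgue-null a.s.\ (indeed Lemma \ref{lem:cerca2} assumes this). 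The estimate $2(C_R+2)\eps$ on the right-hand side comes from bounding the interpolation error by $C_R$ times the slope gap (controlled by Lemma \ref{lem:cerca2}) plus the term $a\xi'(\ol f-\ul f)$, for which the precise choice $\xi'=(y_0 a)^{-1}\1_{(L_1,L_2)}$ after mollification is what makes $a\xi'\leqslant 1$ hold.
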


\begin{proof}
Let us assume that $\theta_-(\beta)<\theta_+(\beta)$, being the assertion otherwise empty. 
In view of Propositions \ref{prop lower bound}  and \ref{prop properties theta} above, it suffices to show that 
$\HV^U(G)(\theta)\leqslant \beta$ for every $\E[\partial_x u_-^\beta(0,\omega)]<\theta<\E[\partial_x u_+^\lambda(0,\omega)]$. 

Let us fix such a $\theta$, and denote by $\ul f,\ol f$ two functions in $L^\infty(\R\times\Omega)$ satisfying, for every 
$\omega\in\Omega$,
\[
\ul f(\cdot,\omega)=\partial_x u^\beta_-(\cdot,\omega),\qquad 
\ol f(\cdot,\omega)=\partial_x u^\beta_+(\cdot,\omega)
\qquad\hbox{a.e. in $\R$.}
\]
Pick $R>\max\{\|\ul f\|_{L^\infty(\R\times\Omega)} ,\|\ol f\|_{L^\infty(\R\times\Omega)}\}$ and denote by $C_R$ the Lipschitz constant of $G$ on $[-R,R]$. Let us fix $\eps>0$. 
The first part of the proof consists in showing the existence of a set $\widehat\Omega$ of probability 1 such that, for every $\omega\in\widehat\Omega$, there exists a function $w_\eps\in\Lip(\R)$ satisfying the following inequality in the viscosity sense
\begin{equation}\label{eq supersolution}
a(x,\omega)u''+G(u')+\beta V(x,\omega)\leqslant \beta+2(C_R+2)\eps\qquad\hbox{in $\R$}, 
\end{equation}
and such that $w_\eps'=\ul f(\cdot,\omega)$ in $(-\infty,-L)$,  $w_\eps'=\ol f(\cdot,\omega)$ in $(L,+\infty)$ for $L>0$ big enough. 

Let us start with the case when the set $\{x\in\R\,:\,a(x,\omega)=0\}$ has positive Lebesgue measure for every $\omega$ in a set $\widehat\Omega$ of probability 1. 
Then, for every fixed $\omega\in\widehat\Omega$, there exists a point $x_0\in\R$ such that $a(x_0,\omega)=0$ and $u^\beta_-(\cdot,\omega)$, $u^\beta_+(\cdot,\omega)$ are both differentiable at $x_0$. 
Note that $\partial_x u^\beta_-(x_0,\omega) \leqslant \partial_x u^\beta_+(x_0,\omega)$ by definition of $u^\beta_-$ and $ u^\beta_+$. 
In view of Proposition \ref{prop pointwise solution} we have 
\[
G\big(\partial_x u^\beta_\pm(x_0,\omega)\big)+\beta V(x_0,\omega)=\beta.
\]
We define $w_\eps$ by setting 
\[
w_\eps(x)=\int_{x_0}^x \Big( \ul f(z,\omega)\1_{(-\infty,x_0]}(z)+\ol f(z,\omega)\1_{(x_0,+\infty)}(z) \Big)\,dz,
\qquad
x\in\R.
\footnote{We denote by $\1_E$ the characteristic function of the set $E$, i.e., the function which is identically equal to 1 on $E$ and to 0 in its complement.}
\]
Then $w_\eps=u_-^\beta(\cdot,\omega)-u_-^\beta(x_0,\omega)$ in $(-\infty,x_0)$ and 
$w_\eps=u_+^\beta(\cdot,\omega)-u_+^\beta(x_0,\omega)$  in $(x_0,+\infty)$, hence $w_\eps$ solves \eqref{eq:cellODE} with $\lambda:=\beta$ in $\R\setminus\{x_0\}$. On the other hand, the subdifferential of $w_\eps$ the point $x_0$ is  
$D^-w_\eps(x_0)=[\partial_x u^\beta_-(x_0,\omega),\partial_x u^\beta_+(x_0,\omega) ]$. Since any $C^2$--subtangent $\varphi$ to $w_\eps$ at $x_0$ satisfies $\varphi'(x_0)\in D^-w_\eps(x_0)$, by the quasiconvex character of $G$ we infer  
\begin{eqnarray*}
a(x_0,\omega)\varphi''(x_0)+G\big(\varphi'(x_0)\big)&+&\beta V(x_0,\omega)
=G\big(\varphi'(x_0)\big)+\beta V(x_0,\omega)\\
&\leqslant& 
\max\left\{G\big(\partial_x u^\beta_-(x_0,\omega)\big), G\big(\partial_x u^\beta_+(x_0,\omega)\big)\right\}+\beta V(x_0,\omega)
=
\beta.
\end{eqnarray*}
We conclude that $w_\eps$ satisfies \eqref{eq supersolution}. \smallskip

Let us then assume that the set $\{x\in\R\,:\,a(x,\omega)=0\}$ is almost surely of zero Lebesgue measure. 
Let us choose $y_0\geqslant 2$, and denote by $\widehat\Omega=\widehat\Omega(\eps,y_0)$ the set of probability 1 chosen according to Lemma \ref{lem:cerca2}. Up to discard a subset of zero probability, we can furthermore assume that 
$\{x\in\R\,:\,a(x,\omega)=0\}$ has zero Lebesgue measure for every $\omega\in\widehat\Omega$. 

Let us fix $\omega\in\widehat\Omega$ and let us choose $\ell_1<L_1<L_2<\ell_2$ as in the statement of Lemma  \ref{lem:cerca2}. Since the functions $u_\pm^\beta$ are of class $C^2$ in $(\ell_1,\ell_2)$, we infer that the functions $\ol f$, $\ul f$ are classical solutions of the following ODE: 
\begin{equation}\label{eq: ODE beta}
a(x,\omega)f'(x)+G(f(x))+\beta V(x,\omega) = \beta\qquad\hbox{in  $(\ell_1,\ell_2)$.}
\end{equation}
We now proceed by arguing as in \cite[Section 3]{DKY23}.  
Pick $r>0$ small enough so that $[L_1 -2 r,L_2 + 2r]\subset (\ell_1,\ell_2)$. 
We shall construct a function $f_\eps(\,\cdot\,,\omega)\in\CC^1(\R)$ by interpolating $\ul f$ and $\ol f$ on $[L_1 - r,L_2 + r]$ as follows:
	\begin{equation}\label{eq:zand}
		f_\eps(x,\omega)=(1-\xi(x,\omega))\ul{f}(x,\omega)+\xi(x,\omega)\ol{f}(x,\omega),
	\end{equation}
	where $\xi(\,\cdot\,,\omega)\in\CC^1(\R)$ is non-decreasing,
	\begin{equation}\label{eq:specs}
		\xi(x,\omega)\equiv 0\ \text{on}\ (-\infty, L_1-r] \quad \text{and} \quad \xi(x,\omega)\equiv 1\ \text{on}\ [L_2+r,+\infty).
	\end{equation}
Note that $\xi'(\,\cdot\,,\omega)$ is supported on $[L_1-r,L_2+r]$. Our goal is to find such a $\xi(\,\cdot\,,\omega)$ such that
	\begin{equation}\label{eq:ecza}
		a(x,\omega)f_\eps'(x,\omega) + G(f_\eps(x,\omega)) + \beta V(x,\omega) \leqslant \lambda +2(C_R+1)\eps
		\qquad
		\hbox{for all $x\in\R$.}
	\end{equation}
	For notational simplicity, we momentarily suppress $(x,\omega)$ from the notation and observe that
	\begin{align}\label{eq:concur}
		af_\eps' + G(f_\eps) + \beta V &= (1-\xi)(a\ul{f}' + G(\ul{f}) + \beta V) + \xi(a\ol{f}' + G(\ol{f}) + \beta V)\nonumber\\
		&\quad + G((1-\xi)\ul{f} + \xi\ol{f}) -(1- \xi) G(\ul{f}) - \xi G(\ol{f}) +a\xi'(\ol{f} - \ul{f})\nonumber\\
		&= \beta + G(\xi\ul{f}+(1-\xi)\ol{f}) - (1-\xi) G(\ul{f}) - \xi G(\ol{f}) +a\xi'(\ol{f}-\ul{f}).
	\end{align}
From \eqref{eq:specs} and \eqref{eq:concur} we see that \eqref{eq:ecza} holds if 
	\begin{equation}\label{eq:ifal}
		(\ol{f} - \ul{f})(x,\omega) < 4\eps \quad \forall x\in[L_1-r,L_2+r]
	\end{equation}
	and
	\begin{equation}\label{eq:husnu}
		a(x,\omega)\xi'(x,\omega) \leqslant 1, \quad \forall x\in[L_1-r,L_2+r].
	\end{equation}
Suppose tentatively that we take $\xi(x,\omega)=\int_{-\infty}^x \zeta(s,\omega)\,ds$, where
    \begin{equation}\label{eq:zeta}
    	\zeta(x,\omega)=(y_0a(x,\omega))^{-1}\mathbbm{1}_{(L_1,L_2)}(x).
    \end{equation}
Note that \eqref{eq:specs} and \eqref{eq:husnu} hold with $r = 0$. However, $\xi(\,\cdot\,,\omega)$ is not in $\CC^1(\R)$. We will solve the issue via a standard mollification argument. 
   
Since $a(\,\cdot\,,\omega) > 0$ in $(\ell_1,\ell_2)$, the difference $(\ol{f}-\ul{f})(\,\cdot\,,\omega)$ is locally Lipschitz due to \eqref{eq: ODE beta} and the fact that $\ul f,\ol f$ are bounded. Hence, up to choosing a smaller $r>0$ if necessary, we infer that \eqref{eq:ifal} holds, in view of Lemma \ref{lem:cerca2} . Take a standard sequence of even convolution kernels $\rho_n$ supported on $[-1/n,1/n]$. For all sufficiently large $n,\ n>1/r$, define $\zeta_n(x,\omega)=(\zeta(\,\cdot\,,\omega)*\rho_n)(x)$ with $\zeta(\,\cdot\,,\omega)$ as in \eqref{eq:zeta}, and let $\xi_n(x,\omega) = \int_{-\infty}^x\zeta_n(s,\omega)\,ds$. Note that $\xi_n(\,\cdot\,,\omega)$ satisfies \eqref{eq:specs}.
	Moreover, letting $\ul{a}(\omega) = \inf_{x\in[L_1-2r,L_2+2r]}a(x,\omega)>0$ and recalling that $y_0\geqslant 2$, the following inequalities hold:
	\begin{align*}
		a(x,\omega)\xi'_n(x,\omega)&\leqslant \frac{1}{y_0}\int_{-1/n}^{1/n}\frac{a(x,\omega)\rho_n(s)}{a(x+s,\omega)}\,ds\\
		&\leqslant \frac12+\frac12\int_{-1/n}^{1/n}\frac{(a(x,\omega)-a(x+s,\omega))\rho_n(s)}{a(x+s,\omega)}\,ds\leqslant \frac12+\frac{\kappa}{\ul{a}(\omega) n}
	\end{align*}
	for all $x\in [L_1 - r,L_2 + r]$, where $\kappa$ is the Lipschitz constant of $\sqrt{a(\,\cdot\,,\omega)}$ from (A).	Choosing $n > \max\left\{\frac1r,\frac{2\kappa}{\ul{a}(\omega)}\right\}$, we deduce that $\xi_n(\,\cdot\,,\omega)$ satisfies \eqref{eq:husnu}, hence \eqref{eq:ecza} follows by choosing $\xi(\,\cdot\,,\omega):= \xi_n(\,\cdot\,,\omega)$. It is easily seen that the function \ $w_\eps(x):=\int_0^{x} f_\eps(z)\,dz$ solves \eqref{eq supersolution}.\smallskip
	
We are ready to prove the assertion, following the argument provided in \cite[Lemma 5.6]{DK22}. There is a set $\wh\Omega_2 \subset \wh\Omega_1$ of probability 1 on which the limits in \eqref{eq:infsup} hold. For every $\omega\in\wh\Omega_2$, let 
\[
\tilde w_\eps(t,x,\omega)=(\beta+K\eps)t+w_\eps(x)+M_\eps(\omega),
\]
where $w_\eps(x)$ is the solution of \eqref{eq supersolution} defined in the first part of the proof,  $K:=2(C_R+2)$ and $M_\eps(\omega)$ will be chosen later to ensure that $\tilde w_\eps(0,x,\omega)\geqslant \theta x$ for all $x\in\R$. Note that $\tilde w_\eps(\,\cdot\,,\,\cdot\,,\omega)$ is a supersolution of \eqref{eq:generalHJ}. Indeed, 
	\begin{align*}
		\partial_t\tilde w_\eps
=
		(\beta+K\eps)
\overset{\eqref{eq:ecza}}{\ge}		
a(x,\omega)\partial_{xx}^2\tilde w_\eps + G(\partial_x \tilde w_\eps)+\beta V(x,\omega)	
\end{align*}
	By Birkhoff Ergodic Theorem and the choice of $\theta$, there exists an $\wh\Omega_3 \subset \wh\Omega_2$ of probability $1$ such that, for every $\omega\in \wh\Omega_3$,
	\[\limsup_{x\to+\infty}\frac{\tilde w_\eps(0,x,\omega)}{x}=\lim_{x\to+\infty}\frac{1}{x}\int_{L_2+r}^x\ol{f}(s,\omega)\,ds=\E[\ol f(0,\omega)] > \theta\]
	and
	\[\liminf_{x\to-\infty}\frac{\tilde w_\eps(0,x,\omega)}{x}=\lim_{x\to-\infty}\frac{1}{|x|}\int_{x}^{L_1 - r}\ul{f}(s,\omega)\,ds=\E[\ul f(0,\omega)] < \theta.\]
	Therefore, for every $\omega\in\wh\Omega_3$, we can pick $M_\eps(\omega)$ large enough so that $\tilde w_\eps(0,x,\omega)\geqslant \theta x$ for all $x\in\R$.  By the comparison principle, $\tilde w_\eps(t,x,\omega)\geqslant u_\theta(t,x,\omega)$ on $\ccyl\times\wh\Omega_3$ and, hence,
	\[\HV^U(G)(\theta)=\limsup_{t\to+\infty}\frac{u_\theta(t,0,\omega)}{t}\leqslant\limsup_{t\to+\infty}\frac{\tilde w_\eps(t,0,\omega)}{t}=\beta+K\eps\] 
	for every $\omega\in \wh\Omega_3$. Since $\eps > 0$ was arbitrarily chosen, we get the assertion.  
\end{proof}

We are now in position to prove Theorem \ref{thm:genhom}.

\begin{proof}[Proof of Theorem \ref{thm:genhom}]
In view of Theorem \ref{teo reduction}, we can assume that $G\in\Hamzero$. 
According to Proposition \ref{prop properties theta}, the effective Hamiltonian $\HV(G)$ associated to equation 
\eqref{eq:generalHJ} can be defined on 
$(-\infty,\theta_-(\beta)]\cup [\theta_+(\beta),+\infty)$ by taking the inverse of the maps $\lambda\mapsto \theta_\pm(\lambda)$. 
More precisely, for every $\theta\in (-\infty,\theta_-(\beta)]\cup [\theta_+(\beta),+\infty)$, there exists a
unique $\lambda\geqslant \beta$ such that either $\theta=\theta_-(\lambda)$ or $\theta_+(\lambda)$. 
For such a $\theta$, we set $\HV(G)(\theta)=\lambda$. 
We extend $\HV(G)$ to the whole $\R$ by setting $\ \HV^L(G)(\theta)=\HV^U(G)(\theta)=\beta$ for every $\theta\in \big(\theta_-(\beta),\theta_+(\beta)\big)$, according to Proposition \ref{prop flat part}.  

The fact that $\HV(G)$ is continuous is a direct consequence of \cite[Proposition 3.4]{DK17}, where condition (L$'$) is satisfied 
due to conditions (A),(V) and the fact that $G\in\Ham$, see for instance Theorem 2.8 and Remark 2.9 in \cite{DK17}. 
From the two inequalities appearing in the statement of Proposition \ref{prop properties theta}, we derive that $\HV(G)$ is locally Lipschitz on $\R$, strictly decreasing on $(-\infty,\theta_-(\beta)]$ and strictly increasing on $[\theta_+(\beta),+\infty)$. Hence 
$\HV(G)$ is quasiconvex.  
\end{proof}

\bibliography{viscousHJ}

\begin{thebibliography}{10}

\bibitem{AC18}
{\sc S.~Armstrong and P.~Cardaliaguet}, {\em Stochastic homogenization of
  quasilinear {H}amilton-{J}acobi equations and geometric motions}, J. Eur.
  Math. Soc. (JEMS), 20 (2018), pp.~797--864.

\bibitem{AS}
{\sc S.~N. Armstrong and P.~E. Souganidis}, {\em Stochastic homogenization of
  level-set convex {H}amilton-{J}acobi equations}, Int. Math. Res. Not. IMRN,
  2013 (2013), pp.~3420--3449.

\bibitem{AT}
{\sc S.~N. Armstrong and H.~V. Tran}, {\em Viscosity solutions of general
  viscous {H}amilton-{J}acobi equations}, Math. Ann., 361 (2015), pp.~647--687.

\bibitem{ATY_nonconvex}
{\sc S.~N. Armstrong, H.~V. Tran, and Y.~Yu}, {\em Stochastic homogenization of
  a nonconvex {H}amilton-{J}acobi equation}, Calc. Var. Partial Differential
  Equations, 54 (2015), pp.~1507--1524.

\bibitem{ATY_1d}
\leavevmode\vrule height 2pt depth -1.6pt width 23pt, {\em Stochastic
  homogenization of nonconvex {H}amilton-{J}acobi equations in one space
  dimension}, J. Differential Equations, 261 (2016), pp.~2702--2737.

\bibitem{bardi}
{\sc M.~Bardi and I.~Capuzzo-Dolcetta}, {\em Optimal control and viscosity
  solutions of {H}amilton-{J}acobi-{B}ellman equations}, Systems \& Control:
  Foundations \& Applications, Birkh\"auser Boston Inc., Boston, MA, 1997.
\newblock With appendices by Maurizio Falcone and Pierpaolo Soravia.

\bibitem{barles_book}
{\sc G.~Barles}, {\em Solutions de viscosit\'e des \'equations de
  {H}amilton-{J}acobi}, vol.~17 of Math\'ematiques \& Applications (Berlin)
  [Mathematics \& Applications], Springer-Verlag, Paris, 1994.

\bibitem{Bill99}
{\sc P.~Billingsley}, {\em Convergence of probability measures}, Wiley Series
  in Probability and Statistics: Probability and Statistics, John Wiley \&
  Sons, Inc., New York, second~ed., 1999.
\newblock A Wiley-Interscience Publication.

\bibitem{CaCa95}
{\sc L.~A. Caffarelli and X.~Cabr\'{e}}, {\em Fully nonlinear elliptic
  equations}, vol.~43 of American Mathematical Society Colloquium Publications,
  American Mathematical Society, Providence, RI, 1995.

\bibitem{CaSo17}
{\sc P.~Cardaliaguet and P.~E. Souganidis}, {\em On the existence of correctors
  for the stochastic homogenization of viscous {H}amilton-{J}acobi equations},
  C. R. Math. Acad. Sci. Paris, 355 (2017), pp.~786--794.

\bibitem{users}
{\sc M.~G. Crandall, H.~Ishii, and P.-L. Lions}, {\em User's guide to viscosity
  solutions of second order partial differential equations}, Bull. Amer. Math.
  Soc. (N.S.), 27 (1992), pp.~1--67.

\bibitem{DK17}
{\sc A.~Davini and E.~Kosygina}, {\em Homogenization of viscous and non-viscous
  {HJ} equations: a remark and an application}, Calc. Var. Partial Differential
  Equations, 56 (2017), pp.~Art. 95, 21.

\bibitem{DK22}
\leavevmode\vrule height 2pt depth -1.6pt width 23pt, {\em Stochastic
  homogenization of a class of nonconvex viscous {HJ} equations in one space
  dimension}, J. Differential Equations, 333 (2022), pp.~231--267.

\bibitem{DKY23}
{\sc A.~Davini, E.~Kosygina, and A.~Yilmaz}, {\em Stochastic homogenization of
  nonconvex viscous hamilton-jacobi equations in one space dimension}, 2023.

\bibitem{DS09}
{\sc A.~Davini and A.~Siconolfi}, {\em Exact and approximate correctors for
  stochastic {H}amiltonians: the 1-dimensional case}, Math. Ann., 345 (2009),
  pp.~749--782.

\bibitem{Evans92}
{\sc L.~C. Evans}, {\em Periodic homogenisation of certain fully nonlinear
  partial differential equations}, Proc. Roy. Soc. Edinburgh Sect. A, 120
  (1992), pp.~245--265.

\bibitem{LusinThm}
{\sc M.~B. Feldman}, {\em A proof of {L}usin's theorem}, Amer. Math. Monthly,
  88 (1981), pp.~191--192.

\bibitem{FFZ}
{\sc W.~M. Feldman, J.-B. Fermanian, and B.~Ziliotto}, {\em An example of
  failure of stochastic homogenization for viscous {H}amilton-{J}acobi
  equations without convexity}, J. Differential Equations, 280 (2021),
  pp.~464--476.

\bibitem{FS}
{\sc W.~M. Feldman and P.~E. Souganidis}, {\em Homogenization and
  non-homogenization of certain non-convex {H}amilton-{J}acobi equations}, J.
  Math. Pures Appl. (9), 108 (2017), pp.~751--782.

\bibitem{Gao16}
{\sc H.~Gao}, {\em Random homogenization of coercive {H}amilton-{J}acobi
  equations in 1d}, Calc. Var. Partial Differential Equations, 55 (2016),
  pp.~Art. 30, 39.

\bibitem{GilTru01}
{\sc D.~Gilbarg and N.~S. Trudinger}, {\em Elliptic partial differential
  equations of second order}, Classics in Mathematics, Springer-Verlag, Berlin,
  2001.
\newblock Reprint of the 1998 edition.

\bibitem{HL97}
{\sc Q.~Han and F.~Lin}, {\em Elliptic partial differential equations}, vol.~1
  of Courant Lecture Notes in Mathematics, New York University, Courant
  Institute of Mathematical Sciences, New York; American Mathematical Society,
  Providence, RI, 1997.

\bibitem{Is95}
{\sc H.~Ishii}, {\em On the equivalence of two notions of weak solutions,
  viscosity solutions and distribution solutions}, Funkcial. Ekvac., 38 (1995),
  pp.~101--120.

\bibitem{I_almostperiodic}
\leavevmode\vrule height 2pt depth -1.6pt width 23pt, {\em Almost periodic
  homogenization of {H}amilton-{J}acobi equations}, in International
  {C}onference on {D}ifferential {E}quations, {V}ol. 1, 2 ({B}erlin, 1999),
  World Sci. Publ., River Edge, NJ, 2000, pp.~600--605.

\bibitem{KRV}
{\sc E.~Kosygina, F.~Rezakhanlou, and S.~R.~S. Varadhan}, {\em Stochastic
  homogenization of {H}amilton-{J}acobi-{B}ellman equations}, Comm. Pure Appl.
  Math., 59 (2006), pp.~1489--1521.

\bibitem{KYZ20}
{\sc E.~Kosygina, A.~Yilmaz, and O.~Zeitouni}, {\em Homogenization of a class
  of one-dimensional nonconvex viscous {H}amilton-{J}acobi equations with
  random potential}, Comm. Partial Differential Equations, 45 (2020),
  pp.~32--56.

\bibitem{LPV}
{\sc P.-L. Lions, G.~Papanicolaou, and S.~Varadhan}, {\em Homogenization of
  {H}amilton-{J}acobi equation}.
\newblock unpublished preprint, circa 1987.

\bibitem{LS_correctors}
{\sc P.-L. Lions and P.~E. Souganidis}, {\em Correctors for the homogenization
  of {H}amilton-{J}acobi equations in the stationary ergodic setting}, Comm.
  Pure Appl. Math., 56 (2003), pp.~1501--1524.

\bibitem{LS_almostperiodic}
\leavevmode\vrule height 2pt depth -1.6pt width 23pt, {\em Homogenization of
  degenerate second-order {PDE} in periodic and almost periodic environments
  and applications}, Ann. Inst. H. Poincar\'e Anal. Non Lin\'eaire, 22 (2005),
  pp.~667--677.

\bibitem{LS2005}
\leavevmode\vrule height 2pt depth -1.6pt width 23pt, {\em Homogenization of
  ``viscous'' {H}amilton-{J}acobi equations in stationary ergodic media}, Comm.
  Partial Differential Equations, 30 (2005), pp.~335--375.

\bibitem{RT}
{\sc F.~Rezakhanlou and J.~E. Tarver}, {\em Homogenization for stochastic
  {H}amilton-{J}acobi equations}, Arch. Ration. Mech. Anal., 151 (2000),
  pp.~277--309.

\bibitem{Sou99}
{\sc P.~E. Souganidis}, {\em Stochastic homogenization of {H}amilton-{J}acobi
  equations and some applications}, Asymptot. Anal., 20 (1999), pp.~1--11.

\bibitem{Y21b}
{\sc A.~Yilmaz}, {\em Stochastic homogenization of a class of quasiconvex
  viscous {H}amilton-{J}acobi equations in one space dimension}, J.
  Differential Equations, 300 (2021), pp.~660--691.

\bibitem{YZ19}
{\sc A.~Yilmaz and O.~Zeitouni}, {\em Nonconvex homogenization for
  one-dimensional controlled random walks in random potential}, Ann. Appl.
  Probab., 29 (2019), pp.~36--88.

\bibitem{Zil}
{\sc B.~Ziliotto}, {\em Stochastic homogenization of nonconvex
  {H}amilton-{J}acobi equations: a counterexample}, Comm. Pure Appl. Math., 70
  (2017), pp.~1798--1809.

\end{thebibliography}
\bibliographystyle{siam}
\end{document}